\newtheorem{assumption}{Assumption}
\renewcommand{\va}[1]{{\boldsymbol{\uppercase{#1}}}}
\renewcommand{\opt}{{\sharp}}
\newcommand{\price}{p}
\newcommand{\Price}{\va{p}}
\newcommand{\PRICE}{\mathcal{P}}
\newcommand{\alloc}{r}
\newcommand{\Alloc}{\va{r}}
\newcommand{\ALLOCATION}{\mathcal{R}}
\newcommand{\cZ}{\mathcal{Z}}
\newcommand{\cF}{\mathcal{F}}
\newcommand{\cG}{\mathcal{G}}
\newcommand{\st}{\text{s.t.}}
\newcommand{\dynamic}{{g}}
\newcommand{\x}{\boldsymbol{X}}
\renewcommand{\u}{\boldsymbol{U}}
\newcommand{\w}{\boldsymbol{W}}
\newcommand{\pvaluefunc}{\underline{V}}
\newcommand{\qvaluefunc}{\overline{V}}
\renewcommand{\horizon}{T}
\newcommand{\final}{T}
\renewcommand{\feedback}{\gamma}                              
\newcommand{\CONE}{S}
\newcommand{\CONESTO}{\boldsymbol{\mathcal{S}}}
\newcommand{\pscal}[2]{#1 \cdot #2}
\renewcommand{\pscal}[2]{\big\langle#1\:,#2\big\rangle}     
\let\citep=\cite
\newcommand{\card}[1]{\vert #1 \vert}
\newcommand{\NODES}{\mathfrak{N}} 
\newcommand{\ARCS}{\mathfrak{A}} 
\newcommand{\site}{\node}
\newcommand{\SITE}{\NODES}
\renewcommand{\defegal}{=}
\newcommand{\FORALLTIMES}[3]{\forall #1\in\ic{#2,#3}}
\newcommand{\post}{{t+1}}
\newcommand{\LL}{{\mathbb L}}
\title{Mixed Spatial and Temporal Decompositions
  for Large Scale Multistage Stochastic
  Optimization Problems}
\author{Pierre~Carpentier\thanks{UMA, ENSTA Paris, IP Paris, France}
  \and
  Jean-Philippe~Chancelier\thanks{CERMICS, Ecole des Ponts, Marne-la-Vall\'ee, France}
  \and
  Michel~De~Lara\footnotemark[2]
  \and
  Fran\c{c}ois~Pacaud\footnotemark[2]
}
\date{July 2020}
\begin{document}
\maketitle

\begin{abstract}
  We consider multistage stochastic optimization problems
  involving multiple units. Each unit is a (small) control system.
  Static constraints couple units at each stage.
  We present a mix of spatial and temporal decompositions
  to tackle such large scale problems.
  More precisely, we obtain theoretical bounds and policies
  by means of two methods, depending whether the coupling constraints
  are handled by prices or by resources.
  We study both centralized and decentralized information structures.
  We report the results of numerical experiments on the
  management of urban microgrids. It appears that decomposition methods are
  much faster and give better results than the standard Stochastic Dual
  Dynamic Programming method, both in terms of bounds and of policy performance.
\end{abstract}



\section{Introduction}

Multistage stochastic optimization problems are, by essence,
complex because their solutions are indexed both by stages
(time) and by uncertainties (scenarios).
Another layer of complexity can come from spatial structure.
The large scale nature of such problems makes decomposition methods
appealing (we refer to
\citep{ruszczynski1997decomposition,carpentier2017decomposition}
for a broad description of decomposition methods in stochastic optimization problems).

We sketch decomposition methods along three dimensions:
\emph{temporal decomposition} methods like Dynamic Programming
break the multistage problem into a sequence of interconnected static
subproblems \citep{bellman57,bertsekas1995dynamic};
\emph{scenario decomposition} methods split large scale stochastic
optimization problems scenario by scenario, yielding deterministic
subproblems \citep{rockafellar1991scenarios,watson2011progressive,kim2018algorithmic};
\emph{spatial decomposition} methods break possible spatial
couplings in a global problem to obtain local decoupled subproblems
\cite{cohen80}.
These decomposition schemes have been applied in many fields,
and especially in energy management: Dynamic Programming methods have been
used for example in dam management \citep{shapiro2012final},
and scenario decomposition has  been successfully applied to
the resolution of unit-commitment problems \citep{bacaud2001bundle},
among others.

Recent developments have mixed spatial decomposition
methods with Dynamic Programming to solve
large scale multistage stochastic optimization problems.
This work led to the introduction of the Dual Approximate
Dynamic Programming (DADP) algorithm, which was first
applied to unit-commitment problems with a single central
coupling constraint linking different stocks
\citep{barty2010decomposition}, and later applied to dams
management problems~\citep{carpentier2018stochastic}.
This article moves
one step further by considering altogether two types
of decompositions (by prices and by resources)
when dealing with general coupling
constraints among units.
General coupling constraints often arise from flows
conservation on a graph, 
and our motivation indeed comes from district microgrid
management, where buildings (units) consume,
produce and store energy and are interconnected through
a network.

The paper is organized as follows.
In Sect.~\ref{chap:nodal}, we introduce a generic stochastic
multistage problem with different subsystems linked together
via a set of static coupling constraints.
We present price and resource decomposition schemes, that make use of
so-called admissible coordination processes. We show how to
bound the global Bellman functions above by a sum of local
resource-decomposed value functions, and below by a sum of
local price-decomposed value functions.
In Sect.~\ref{sec:genericdecomposition}, we study the special
case of deterministic coordination processes. First, we show
that the local price and resource decomposed value functions
satisfy recursive Dynamic Programming equations. Second, we
outline how to improve the bounds. Third, we show how to use the decomposed Bellman
functions to devise admissible policies for the global problem.
Finally, we provide an analysis of the decentralized information
structure, that is, when the controls of a given subsystem only
depend on the past observations of the noise in that same subsystem.
In Sect.~\ref{chap:district:numerics}, we present numerical
results for the optimal management of different microgrids
of increasing size and complexity.
We compare the two decomposition algorithms with
(state of the art) Stochastic Dual Dynamic Programming (SDDP) algorithm.
The analysis of case studies consisting of district
microgrids coupling up to 48 buildings together enlightens
that decomposition methods give better results in terms of cost
performance, and achieve up to a four times speedup in terms of computational
time.

\section{Upper and Lower Bounds by Spatial Decomposition}
\label{chap:nodal}

We focus in \S\ref{sec:nodal:generic} on a generic decomposable
optimization problem and present price and resource decomposition
schemes. In~\S\ref{sec:nodal:globalpb}, we apply these two
methods to a multistage stochastic optimization problem, by decomposing
a global static coupling constraint by means of so-called price
and resource coordination processes. For such problems,
we define the notions of centralized and decentralized information structures.

\subsection{Bounds for an Optimization Problem under Coupling Constraints
  via Decomposition}
\label{sec:nodal:generic}

In~\S\ref{sec:nodal:genericproblem},
we introduce a generic optimization problem with coupled local units.
In~\S\ref{subsec:nodal:bounds}, we show how to bound its optimal value by
decomposition.

\subsubsection{Global Optimization Problem Formulation}
\label{sec:nodal:genericproblem}

Let~$\SITE$ be a finite set, representing local units~\( \site \in \SITE \)
(we use the letter~$\SITE$ as units can be seen as nodes on a graph).
Let~$\sequence{\cZ^\site}{\site \in \SITE}$ be a family of sets and
$J^\site: \cZ^\site \rightarrow \OpenIntervalClosed{-\infty}{+\infty}$, $\site \in \SITE$,
be local criteria, one for each unit, taking values in the extended reals
\( \OpenIntervalClosed{-\infty}{+\infty} \) ($+\infty$ included to allow for possible constraints).
Let $\sequence{\ALLOCATION^\site}{\site \in \SITE}$, be a family of vector spaces
and $\vartheta^\site: \cZ^\site \rightarrow \ALLOCATION^\site$, $\site \in \SITE$,
be mappings that model local constraints.

From these \emph{local} data, we formulate a \emph{global}
minimization problem under constraints. We define the product
set~$\cZ = \prod_{\site\in \SITE} \cZ^\site $ and the product
space~$\ALLOCATION=\prod_{\site\in \SITE} \ALLOCATION^\site$.
Finally, we introduce a subset $\CONE \subset  \ALLOCATION$
that captures the coupling constraints between the $N$~units.
Using the notation \( z=\sequence{z^\site}{\site \in \SITE} \),
we define the \emph{global optimization} problem as
\begin{subequations}
  \label{eq:gen:genpb}
  \begin{equation}
    V\opt = \inf_{z \in \cZ} \;
    \sum_{\site \in \SITE} J^\site(z^\site) \eqfinv
  \end{equation}
  under the \emph{global coupling constraint}
  \begin{equation}
    \label{eq:gen:coupling}
    \ba{\vartheta^\site(z^\site)}_{\site \in \SITE} \in -\CONE \eqfinp
  \end{equation}
\end{subequations}
The set~$\CONE$ is called the \emph{primal admissible set},
and an element~$\sequence{\alloc^\site}{\site \in \SITE}\in-\CONE$
is called an \emph{admissible resource vector}.
We note that, without Constraint~\eqref{eq:gen:coupling},
Problem~\eqref{eq:gen:genpb} would decompose into $|\SITE|$ independent
subproblems in a straightforward manner.

We moreover assume that, for $\site \in \SITE$, the space~$\ALLOCATION^\site$
(resources) is paired with a space~$\PRICE^\site$ (prices) by bilinear forms
$\bscal{\cdot}{\cdot}\; : \; \PRICE^\site \times \ALLOCATION^\site \to \RR$
(duality pairings).
We define the product space~$\PRICE= \prod_{\site \in \SITE} \PRICE^\site$,
so that~$\ALLOCATION$ and~$\PRICE$ are paired by the duality pairing
$\bscal{\price}{\alloc} = \sum_{\site\in \SITE} \bscal{\price^\site}{\alloc^\site}$
(see \cite{rockafellar1974conjugate} for further details; a typical
example of paired spaces is a Hilbert space and itself).

\subsubsection{Upper and Lower Bounds from Price and Resource Value Functions}
\label{subsec:nodal:bounds}

Consider the global optimization problem~\eqref{eq:gen:genpb}.
For each~$\site \in \SITE$, we introduce \emph{local price value
  functions} $\underline V^\site : \PRICE^\site \to \ClosedIntervalOpen{-\infty}{+\infty}$
defined by
\begin{equation}
  \label{eq:nodal:genericdualvf}
  \underline V^\site\nc{\price^\site}
  =  \inf_{z^\site \in \cZ^\site} \; J^\site(z^\site) +
  \bscal{\price^\site}{\vartheta^\site(z^\site)}
  \eqfinv
\end{equation}
where we have supposed that \( V^\site\nc{\price^\site} < +\infty \),
and \emph{local resource value functions}
$\overline V^\site: \ALLOCATION^\site \to  \OpenIntervalClosed{-\infty}{+\infty}$
defined by
\begin{equation}
  \label{eq:nodal:genericprimalvf}
  \overline V^\site\nc{\alloc^\site}
  = \inf_{z^\site\in \cZ^\site} \; J^\site(z^\site)
  \quad \st\ \quad \vartheta^\site(z^\site) = \alloc^\site
  \eqfinv
\end{equation}
where we have supposed that \( \overline V^\site\nc{\alloc^\site} > -\infty \),

We denote by $\CONE^\star \subset \PRICE$ the dual cone
associated with the constraint set~$\CONE$:
\begin{equation}
  \label{eq:nodal:dualcone}
  \CONE^\star \defegal
  \ba{\price \in \PRICE \; |\;
    \bscal{\price}{\alloc} \geq 0 \eqsepv
    \forall \alloc \in \CONE} \eqfinp
\end{equation}
The cone~$\CONE^\star$ is called the \emph{dual admissible set},
and an element~$\sequence{\price^\site}{\site \in \SITE}\in\CONE^\star$
is called an \emph{admissible price vector}.
We now establish lower and upper bounds for Problem~\eqref{eq:gen:genpb},
and show how they can be computed in a decomposed way, that is, unit by unit.

\begin{proposition}
  \label{prop:nodal:valuefunctionsbounds}
  For any admissible price vector
  $\price = \sequence{\price^\site}{\site \in \SITE} \in \CONE^\star$
  and for any admissible resource vector
  $\alloc =\sequence{\alloc^\site}{\site \in \SITE} \in -\CONE$,
  we have the following lower and upper decomposed estimates
  of the global minimum $V\opt$ of Problem~\eqref{eq:gen:genpb}:
  \begin{equation}
    \label{eq:nodal:generic:bounds}
    \sum_{\site \in \SITE} \underline V^\site\nc{\price^\site}
    \; \leq \; V\opt \; \leq \;
    \sum_{\site \in \SITE} \overline V^\site\nc{\alloc^\site}
    \eqfinp
  \end{equation}
\end{proposition}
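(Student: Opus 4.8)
The plan is to prove the two inequalities in~\eqref{eq:nodal:generic:bounds} separately, each by a short weak-duality argument that exploits the fact that both sides decompose over units.

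\textbf{Lower bound.} Fix an admissible price vector $\price \in \CONE^\star$. For any feasible $z \in \cZ$ for Problem~\eqref{eq:gen:genpb}, write $\alloc^\site = \vartheta^\site(z^\site)$, so that $\alloc = \ba{\alloc^\site}_{\site\in\SITE} \in -\CONE$ by Constraint~\eqref{eq:gen:coupling}. Then $-\alloc \in \CONE$, and by the definition~\eqref{eq:nodal:dualcone} of the dual cone, $\bscal{\price}{-\alloc} \geq 0$, i.e. $\sum_{\site\in\SITE}\bscal{\price^\site}{\vartheta^\site(z^\site)} \leq 0$. Hence
\begin{equation*}
  \sum_{\site\in\SITE} J^\site(z^\site)
  \;\geq\; \sum_{\site\in\SITE} \Bigl( J^\site(z^\site) + \bscal{\price^\site}{\vartheta^\site(z^\site)} \Bigr)
  \;\geq\; \sum_{\site\in\SITE} \inf_{\tilde z^\site\in\cZ^\site}\Bigl( J^\site(\tilde z^\site) + \bscal{\price^\site}{\vartheta^\site(\tilde z^\site)} \Bigr)
  \;=\; \sum_{\site\in\SITE}\underline V^\site\nc{\price^\site}\eqfinp
\end{equation*}
Taking the infimum over all feasible $z$ gives $V\opt \geq \sum_{\site\in\SITE}\underline V^\site\nc{\price^\site}$. (If Problem~\eqref{eq:gen:genpb} is infeasible, then $V\opt = +\infty$ and the bound is trivial.)

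\textbf{Upper bound.} Fix an admissible resource vector $\alloc \in -\CONE$. For each $\site\in\SITE$, any $z^\site\in\cZ^\site$ with $\vartheta^\site(z^\site)=\alloc^\site$ is feasible for the local problem~\eqref{eq:nodal:genericprimalvf}; assembling such choices over all units yields a $z\in\cZ$ that satisfies $\ba{\vartheta^\site(z^\site)}_{\site\in\SITE} = \alloc \in -\CONE$, hence is feasible for the global problem~\eqref{eq:gen:genpb}. Therefore $V\opt \leq \sum_{\site\in\SITE} J^\site(z^\site)$; taking, for each $\site$, the infimum over $z^\site$ subject to $\vartheta^\site(z^\site)=\alloc^\site$ gives $V\opt \leq \sum_{\site\in\SITE}\overline V^\site\nc{\alloc^\site}$. (If some local problem is infeasible, the corresponding $\overline V^\site\nc{\alloc^\site}=+\infty$ and the bound again holds trivially.)

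The argument is essentially routine weak duality, so there is no deep obstacle; the only points requiring care are the bookkeeping around extended-real values and empty feasible sets — in particular making sure the infima and sums interact correctly when terms are $\pm\infty$ — and checking that the standing assumptions $\underline V^\site\nc{\price^\site} < +\infty$ and $\overline V^\site\nc{\alloc^\site} > -\infty$ made after~\eqref{eq:nodal:genericdualvf} and~\eqref{eq:nodal:genericprimalvf} are exactly what is needed for the sums $\sum_\site \underline V^\site\nc{\price^\site}$ and $\sum_\site \overline V^\site\nc{\alloc^\site}$ to be well defined (no $-\infty + \infty$). I would state these sign conventions explicitly at the start of the proof and then the two displays above close the argument.
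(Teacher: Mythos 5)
Your proof is correct and follows essentially the same route as the paper's: the lower bound is the standard weak-duality argument (the pairing term is nonpositive on the feasible set because $\price\in\CONE^\star$ and the resources lie in $-\CONE$), and the upper bound comes from observing that $V\opt=\inf_{\tilde\alloc\in-\CONE}\sum_{\site}\overline V^\site\nc{\tilde\alloc^\site}$. The only cosmetic difference is that you argue pointwise over feasible $z$ while the paper chains inequalities between infima; your explicit care with the $\pm\infty$ conventions matches the standing assumptions the paper invokes at the start of its proof.
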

\begin{proof}
  Because we have supposed that \( V^\site\nc{\price^\site} < +\infty \),
  the left hand side of Equation~\eqref{eq:nodal:generic:bounds} belongs
  to~$\ClosedIntervalOpen{-\infty}{+\infty}$. In the same way, the right
  hand side belongs to~$\OpenIntervalClosed{-\infty}{+\infty}$.
  For a given $\price = \sequence{\price^\site}{\site\in \SITE}  \in \CONE^\star$,
  we have
  \begin{align*}
    \sum_{\site \in \SITE} \underline V^\site\nc{\price^\site}
    &= \sum_{\site \in \SITE} \inf_{z^\site \in \cZ^\site} \;
      J^\site(z^\site) + \bscal{\price^\site}{\vartheta^\site(z^\site)}
      \eqfinv \\
    &= \inf_{z \in \cZ} \; {\sum_{\site \in \SITE} J^\site(z^\site)} +
      \bscal{\price}{\sequence{\vartheta^\site(z^\site)}{\site \in \SITE}}
      \eqfinv
      \tag{since \( z=\sequence{z^\site}{\site \in \SITE} \)}
    \\
    &\le \inf_{z \in \cZ} \; {\sum_{\site \in \SITE} J^\site(z^\site)} +
      \bscal{\price}{\sequence{\vartheta^\site(z^\site)}{\site \in \SITE}}
      \eqfinv \\
    &\hphantom{\le \inf_{z \in \cZ}} \st\
      {\sequence{\vartheta^\site(z^\site)}{\site \in \SITE}} \in - \CONE
      \tag{minimizing on a smaller set} \\
    &\le \inf_{z \in \cZ} \; {\sum_{\site \in \SITE} J^\site(z^\site) +0}
      \tag{as $\price\in \CONE^\star$ and by definition~\eqref{eq:nodal:dualcone} of~$\CONE^\star$}
    \\
    &\hphantom{\le \inf_{z \in \cZ}} \st\
      {\sequence{\vartheta^\site(z^\site)}{\site \in \SITE}} \in - \CONE
      \eqfinv
  \end{align*}
  which gives the lower bound inequality.

  The upper bound is easily obtained,
  as the optimal value $V\opt$ of Problem~\eqref{eq:gen:genpb} is given by
  $\inf_{\tilde\alloc \in -\CONE} \sum_{\site \in \SITE} \overline V^\site\nc{\tilde\alloc^\site}
  \leq \sum_{\site \in \SITE} \overline V^\site\nc{\alloc^\site}$ for any $\alloc \in -\CONE$.
\end{proof}

\subsection{The Special Case of Multistage Stochastic Optimization Problems
  \label{sec:nodal:globalpb}}

Now, we turn to the case where Problem~\eqref{eq:gen:genpb}
corresponds to a multistage stochastic optimization problem
elaborated from local data (local states, local controls, and
local noises), with global coupling constraints at each time step.
We use the notation \( \ic{r,s}=\na{r,r+1,\ldots,s-1,s} \) for two integers
$r \leq s$, and we consider a time span \( \ic{0,\final} \) where $\final
\in \NN^\star$ is a finite horizon.

\subsubsection{Local Data for Local Stochastic Control Problems}
\label{subsec:nodal:generic:localdata}

We detail the \emph{local} data describing
each unit. Let
$\ba{\XX_t^\site}_{t\in\ic{0,\final}}$,
$\ba{\UU_t^\site}_{t\in\ic{0,\final-1}}$
and $\ba{\WW_t^\site}_{t\in\ic{1,\final}}$
be sequences of measurable spaces for each unit $\site \in \SITE$.
We consider two other sequences of measurable vector spaces
$\ba{\ALLOCATION_t^\site}_{t\in\ic{0,\final-1}}$ and
$\ba{\PRICE_t^\site}_{t\in\ic{0,\final-1}}$
such that for all~$t$, $\ALLOCATION_t^\site$ and
$\PRICE_t^\site$ are paired spaces,
equipped with a bilinear form~$\pscal{\cdot}{\cdot}$.
We also introduce, for all~$\site \in \SITE$ and for
all~$t \in \ic{0, \final-1}$,
\begin{itemize}
\item
  measurable \emph{local dynamics}
  $g_t^\site : \XX_t^\site \times \UU_t^\site \times \WW_\post^\site \to \XX_\post^\site$,
\item
  measurable \emph{local coupling functions}
  $\Theta_t^\site: \XX_t^\site \times \UU_t^\site \to \ALLOCATION_t^\site$,
\item
  measurable \emph{local instantaneous costs}
  $L_t^\site: \XX_t^\site \times \UU_t^\site \times \WW_\post^\site \rightarrow
  \OpenIntervalClosed{-\infty}{+\infty}$,
\end{itemize}
and a measurable \emph{local final cost}
$K^\site: \XX^\site_\final \rightarrow \OpenIntervalClosed{-\infty}{+\infty}$.
We incorporate possible local constraints (for instance constraints
coupling the control with the state) directly in the instantaneous
costs~$L_t^\site$ and the final cost~$K^\site$, since they are extended real
valued functions which can possibly take the value $+\infty$.

From local data given above, we define the global state,
control, noise, resource and price spaces at time~$t$ as
\begin{equation*}
  \XX_t = \prod_{\site \in \SITE} \XX_t^\site , \;\:
  \UU_t = \prod_{\site \in \SITE} \UU_t^\site , \;\:
  \WW_t = \prod_{\site \in \SITE} \WW_t^\site , \;\:
  \ALLOCATION_t = \prod_{\site \in \SITE} \ALLOCATION_t^\site , \;\:
  \PRICE_t = \prod_{\site \in \SITE} \PRICE_t^\site
  \eqfinp
\end{equation*}
We suppose given a \emph{global constraint set}
$\CONE_t \subset \ALLOCATION_t$ \emph{at time~$t$}.
We define the global resource and price spaces~$\ALLOCATION$
and~$\PRICE$, and the global constraint set~$\CONE \subset \ALLOCATION$, as
\begin{equation}
  \ALLOCATION = \prod_{t=0}^{\final-1} \ALLOCATION_t
  \eqsepv
  \PRICE = \prod_{t=0}^{\final-1} \PRICE_t
  \eqsepv
  \CONE  = \prod_{t=0}^{\final-1} \CONE_t
  \subset \ALLOCATION
  \eqfinv
  \label{eq:nodal:global_constraint_set}
\end{equation}
and we denote by $\CONE^\star \subset \PRICE$ the dual cone of $\CONE$
(see Equation~\eqref{eq:nodal:dualcone}).

\subsubsection{Centralized and Decentralized Information Structures}
\label{subsec:nodal:generic:globaldata}

We introduce a probability space $(\Omega, \cF, \PP)$. For every unit
$\site \in\SITE$, we introduce \emph{local exogenous noise processes}
$\w^\site = \na{\w_t^\site}_{t\in \ic{1, \final}}$, where each
$\w_t^\site:\Omega\to\WW_t^\site$ is a random variable.\footnote{Random variables
  are denoted using bold letters.} We denote by
\begin{equation}
  \label{eq:nodal:generic:globalnoise}
  \w=(\w_1,\cdots,\w_\final)
  \quad\text{ where }\quad
  \w_t = \sequence{\w_t^\site}{\site \in \SITE}
\end{equation}
the \emph{global noise process}.
\begin{subequations}
  We consider two \emph{information structures}
  \cite[Chap.~3]{carpentier2015stochastic}:
  \begin{itemize}
  \item
    the \emph{centralized} information structure, represented
    by the filtration~$\cF = \np{\cF_t}_{t \in \ic{0,\final}}$,
    associated with the global noise process~$\w$ in~\eqref{eq:nodal:generic:globalnoise},
    where
    \begin{equation}
      \label{eq:nodal:globalinfo}
      \cF_t = \sigma(\w_1, \cdots, \w_t)
      = \sigma\bp{ \sequence{\w_1^\site}{\site \in \SITE}, \cdots,
        \sequence{\w_t^\site}{\site \in \SITE} }
    \end{equation}
    is the $\sigma$-field generated by all noises up to time~$t \in \ic{0,\final}$,
    with the convention~$\cF_0 = \{\emptyset,\Omega\}$,
  \item
    the \emph{decentralized} information structure, represented
    by the family \( \sequence{\cF^\site}{\site \in \SITE} \)
    of filtrations $\cF^\site = \np{\cF_t^\site}_{t \in \ic{0,\final}}$,
    where, for any unit $\site \in \SITE $ and any time $t \in \ic{0,\final}$,
    \begin{equation}
      \label{eq:nodal:localinfo}
      \cF_t^\site = \sigma(\w_1^\site, \cdots, \w_t^\site)
      \subset \cF_t = \bigvee_{\site' \in \SITE} \cF_t^{\site'}
      \eqfinv
    \end{equation}
    with~$\cF_0^\site = \{\emptyset,\Omega\}$.
    The \emph{local} $\sigma$-field~$\cF_t^\site$ captures
    the information provided by the uncertainties up to time~$t$,
    \emph{but only in unit~$\site$}.
  \end{itemize}
  \label{eq:nodal:infos}
\end{subequations}
In the sequel, for a given filtration~$\cG$ and a given measurable
space~$\YY$, we denote by $\LL^0(\Omega, \cG, \PP ; \YY)$ the space
of~\emph{$\cG$-adapted processes taking values in the space~$\YY$}.

\subsubsection{Global Stochastic Control Problem}

We denote by $\x_t = \sequence{\x_t^\site}{\site\in \SITE}$
and $\u_t = \sequence{\u_t^\site}{\site\in \SITE}$
families of random variables (each of them with values in~$\XX_t^\site$
and in $\UU_t^\site$).
The stochastic processes $\x = (\x_0,\cdots,\x_\final)$
and~$\u= (\u_0,\cdots,\u_{\final-1}) $ are called
\emph{global state} and \emph{global control} processes.
The stochastic processes $\x^\site = (\x_0^\site,\cdots,\x_\final^\site)$
and $\u^\site = (\u_0^\site,\cdots,\u_{\final-1}^\site)$ are called
\emph{local state} and \emph{local control processes}.

With the data detailed in \S\ref{subsec:nodal:generic:localdata}
and \S\ref{subsec:nodal:generic:globaldata}, we formulate
a family of optimization problems as follows.
At each time $t \in \ic{0, \final}$, the \emph{global value function}
$V_t : \prod_{\site \in \SITE} \XX_{t}^{\site} \rightarrow [-\infty,+\infty]$
is defined, for all
$\sequence{x_t^\site}{\site \in \SITE} \in \prod_{\site \in \SITE}
\XX_{t}^{\site}$, by
(with the convention~$V_\final=\sum_{\site \in \SITE}K^\site$)
\begin{subequations}
  \label{eq:nodal:vf}
  \begin{align}
    V_t\bp{\sequence{x_t^\site}{\site \in \SITE}} = \inf_{\x, \u} \;
    & \EE \bgc{\sum_{\site \in \SITE} \sum_{s=t}^{\final-1}
      L^\site_s(\va X_s^\site, \va U_s^\site, \va W^\site_{s+1}) +
      K^\site(\x_\horizon^\site)} \eqfinv
      \label{eq:nodal:expected_value}
    \\
    \st
    &\; \x_t^\site = x_t^\site \text{\, and \,} \FORALLTIMES{s}{t}{\final\!-\!1}
      \eqfinv \nonumber
    \\
    &\x_{s+1}^\site = \dynamic_s^\site(\x^\site_s, \u_s^\site, \w_{s+1}^\site)
      \eqsepv \x_t^\site = x_t^\site
      \eqfinv
      \label{eq:nodal:dynamic}
    \\
    &\sigma(\u_s^\site) \subset \cG_s^\site
      \eqfinv
      \label{eq:nodal:measurability}
    \\
    &
      \ba{\Theta_s^\site(\x_s^\site, \u_s^\site)}_{\site \in \SITE} \in -\CONE_s
      \eqfinp
      \label{eq:nodal:couplingcons}
  \end{align}
\end{subequations}
In the global value function~\eqref{eq:nodal:vf}, the expected
value is taken \wrt\ (with respect to) the global uncertainty
process~$(\w_\post, \cdots, \w_\final)$.
We assume that measurability and integrability assumptions hold true,
so that the expected value in~\eqref{eq:nodal:expected_value} is well
defined. Constraints~\eqref{eq:nodal:measurability} --- where
$\sigma(\u_s^\site)$ is the $\sigma$-field generated by the random
variable~$\u_s^\site$ --- express the fact that each decision
$\u_s^\site$ is $\cG_s^\site$-measurable, that is, measurable
either \wrt\ the global information~$\cF_s$ (centralized information
structure) available at time~$s$ (see~Equation~\eqref{eq:nodal:globalinfo})
or \wrt\ the local information $\cF_s^\site$ (decentralized
information structure) available at time~$s$ for unit~$\site$
(see~Equation~\eqref{eq:nodal:localinfo}), as detailed
in~\S\ref{subsec:nodal:generic:globaldata}.
Finally, Constraints~\eqref{eq:nodal:couplingcons} express
the global coupling constraint at time~$s$ between all units
and have to be understood in the $\PP$-almost sure sense.

We are mostly interested in the \emph{global optimization problem}~$
V_0\np{x_0}$,
where $x_0 = \sequence{x_0^\site}{\site \in \SITE} \in \XX_0$ is the initial
state, that is, Problem~\eqref{eq:nodal:vf} for~$t=0$.

\subsubsection{Local Price and Resource Value Functions}
\label{sec:nodal:localvaluefunctions}

As in \S\ref{subsec:nodal:bounds}, we define
local price and local resource value functions for the
global multistage stochastic optimization problems~\eqref{eq:nodal:vf}.

For this purpose, we introduce a duality pairing between stochastic processes.
For each $\site\in\SITE$, we consider subspaces
\( \widetilde{\LL}(\Omega,\cF,\PP ;\ALLOCATION^\site)
\subset \LL^0(\Omega, \cF, \PP ; \ALLOCATION^\site) \)
and
\( \widetilde{\LL}^{\star}(\Omega,\cF,\PP ;\PRICE^\site)
\subset \LL^0(\Omega, \cF, \PP ; \PRICE^\site) \) such that
 the duality product terms
 $\EE\bc{\sum_{t=0}^{\final-1}\pscal{\Price_t^\site}{\Theta_t^\site(\x_t^\site,\u_t^\site)}}$
 in Equation~\eqref{eq:nodal:priceproblem-t}
are well defined (like in the case of square integrable
random variables, when
$\Theta_t^\site(\x_t^\site, \u_t^\site) \in \LL^2(\Omega,\cF_t, \PP ; \RR^d)$
and $\Price_t^\site \in \LL^2(\Omega,\cF_t, \PP ; \RR^d)$).

Let $\site \in \SITE$ be a local unit, and
$\Price^\site = (\Price_0^\site, \cdots, \Price_{\final-1}^\site)
\in \widetilde{\LL}^{\star}(\Omega,\cF,\PP ;\PRICE^\site)$) be
a \emph{local price process} --- hence, adapted to the global
filtration $\cF$ in \eqref{eq:nodal:globalinfo} generated by
the global noises (note that we do not assume that it is adapted
to the local filtration $\cF^\site$ in \eqref{eq:nodal:localinfo}
generated by the local noises).
When specialized to the context of Problems~\eqref{eq:nodal:vf},
Equation~\eqref{eq:nodal:genericdualvf} gives,
at each time $t \in \ic{0, \final}$,
what we call \emph{local price value functions}
$\underline V^\site_t\nc{\Price^\site} : \XX_{t}^{\site} \rightarrow
\ClosedIntervalOpen{-\infty}{+\infty}$
defined, for all $x_t^\site \in \XX_t^\site$, by
(with the convention~$\underline V^\site_\final\nc{\Price^\site}=K^\site$)
\begin{align}
  \underline V^\site_t\nc{\Price^\site}(x_t^\site) = \inf_{\x^\site, \u^\site} \;
  & \EE \bigg[\sum_{s=t}^{\final-1}
    \Big(L^\site_s(\va X_s^\site,\va U_s^\site,\va W^\site_{s+1})
    \nonumber \\
  & \hspace{1.5cm} + \pscal{\Price_s^\site}{\Theta_s^\site(\x_s^\site, \u_s^\site)}\Big)
    +  K^\site(\x_\horizon^\site)\bigg] \eqfinv
    \label{eq:nodal:priceproblem-t}
  \\
  \st
  & \; \x_t^\site = x_t^\site \text{\, and \,} \FORALLTIMES{s}{t}{\final\!-\!1},
    \eqref{eq:nodal:dynamic}, \eqref{eq:nodal:measurability}.
    \nonumber
\end{align}
We suppose that
\( \underline V^\site_t\nc{\Price^\site}(x_t^\site) < +\infty \)
in~\eqref{eq:nodal:priceproblem-t}.
%
We define the \emph{global price value function}
$\underline V_t\nc{\Price^\site} : \XX_{t} \rightarrow \ClosedIntervalOpen{-\infty}{+\infty}$
at time~$t\in\ic{0,\final}$ as
the sum of the corresponding local price value functions, that is,
using the notation \( x_t=\sequence{x_t^\site}{\site \in \SITE} \),
\begin{equation}
  \label{eq:global:priceproblem}
  \underline V_t\nc{\Price}(x_t) = \sum_{\site \in \SITE}
  \underline V^\site_t\nc{\Price^\site}(x_t^\site)
  \eqsepv \forall x_t \in \XX_{t}
  \eqfinp
\end{equation}

In the same vein, let
$\Alloc^\site =(\Alloc_0^\site,\cdots,\Alloc_{\final-1}^\site)
\in \widetilde{\LL}(\Omega,\cF,\PP ; \ALLOCATION^\site)$
be a \emph{local resource process}.
Equation~\eqref{eq:nodal:genericprimalvf} gives,
at each time $t \in \ic{0, \final}$,
what we call \emph{local resource value function},
$\overline V^\site_t\nc{\Alloc^\site} : \XX_{t}^{\site} \rightarrow \OpenIntervalClosed{-\infty}{+\infty}$
defined, for all $x_t^\site \in \XX_t^\site$, by
(with the convention~$\qvaluefunc^\site_\final\nc{\Alloc^\site}=K^\site$)
\begin{subequations}
  \label{eq:nodal:quantproblem-t}
  \begin{align}
    & \qvaluefunc^\site_t\nc{\Alloc^\site}(x_t^\site) =
      \inf_{\x^\site, \u^\site} \;
      \EE \bgc{ \sum_{s=t}^{\final-1}
      L^\site_s(\va X_s^\site, \va U_s^\site, \va W^\site_{s+1}) +
      K^\site(\x_\horizon^\site)} \eqfinv \\
    \st
    & \, \x_t^\site = x_t^\site \text{\, and \,} \FORALLTIMES{s}{t}{\final\!-\!1},
      \eqref{eq:nodal:dynamic}, \eqref{eq:nodal:measurability}
      \text{\, and \,} \Theta_s^\site(\x_s^\site, \u_s^\site) = \Alloc_s^\site \eqfinp
  \end{align}
\end{subequations}
We suppose that
\( \qvaluefunc^\site_t\nc{\Alloc^\site}(x_t^\site) > -\infty \)
in~\eqref{eq:nodal:quantproblem-t}.
We define the \emph{global resource value function}
\( \overline V_t\nc{\Alloc} : \XX_{t} \rightarrow \OpenIntervalClosed{-\infty}{+\infty} \)
at time $t \in \ic{0, \final}$ as the sum
of the local resource value functions, that is,
\begin{equation}
  \label{eq:global:quantproblem}
  \overline V_t\nc{\Alloc}(x_t) = \sum_{\site \in \SITE}
  \overline V^\site_t\nc{\Alloc^\site}(x_t^\site)
  \eqsepv \forall x_t \in \XX_{t}
  \eqfinp
\end{equation}
We call the global processes
$\Price \in \widetilde{\LL}^{\star}(\Omega, \cF, \PP ; \PRICE)$
and
$\Alloc \in \widetilde{\LL}(\Omega,\cF,\PP ;\ALLOCATION)$
respectively
\emph{price coordination processes}
and
\emph{ressource coordination processes}.

\subsubsection{Global Upper and Lower Bounds}
\label{subsec:nodal:globalprocess}

Applying Proposition~\ref{prop:nodal:valuefunctionsbounds}
to the local price value functions~\eqref{eq:nodal:priceproblem-t}
and resource value functions~\eqref{eq:nodal:quantproblem-t}
makes it possible to bound the values of the global problems~\eqref{eq:nodal:vf}.
For this purpose, we first define the notion of \emph{admissible}
price and resource coordination processes.

\begin{subequations}
  We introduce the primal admissible set~$\CONESTO$ of stochastic processes
  associated with the almost sure constraints~\eqref{eq:nodal:couplingcons}:
  \begin{multline}
    \label{eq:nodal:primaladmissibleset}
    \CONESTO = \Big\{\va y = (\va y_0,\cdots,\va y_{\final-1})
    \in \widetilde{\LL}(\Omega,\cF,\PP;\ALLOCATION)
    \\
    \;\; \text{\st} \;\;
    \va y_t \in \CONE_t  \;\; \PP\text{-}\as \eqsepv
    \FORALLTIMES{t}{0}{\final\!-\!1}\Big\} \eqfinp
  \end{multline}
  Then, the dual admissible cone of~$\CONESTO$ is
  \begin{multline}
    \label{eq:nodal:dualadmissibleset}
    \CONESTO^\star = \Big\{\va z = (\va z_0, \cdots, \va z_{\final-1})
      \in \widetilde{\LL}^{\star}(\Omega,\cF,\PP;\PRICE) \\
      \text{\st} \;\;
      \EE \bc{\pscal{\va y_t}{\va z_t}} \geq 0 \eqsepv
      \forall \: \va y \in \CONESTO \eqsepv
      \FORALLTIMES{t}{0}{\final\!-\!1}\Big\}  \eqfinp
  \end{multline}
  \label{eq:nodal:admissiblesets}
\end{subequations}

We say that $\Price \in \widetilde{\LL}^{\star}(\Omega,\cF,\PP;\PRICE)$
is an \emph{admissible price coordination process}
if
$\Price\in\CONESTO^\star$, and that
$\Alloc \in \widetilde{\LL}(\Omega,\cF,\PP ;\ALLOCATION)$
is an \emph{admissible resource coordination process}
if $\Alloc \in -\CONESTO$.
By considering admissible coordination processes, we will now
bound up and down the global value functions~\eqref{eq:nodal:vf} with
the local value functions~\eqref{eq:nodal:priceproblem-t}
and~\eqref{eq:nodal:quantproblem-t}.

\begin{proposition}
  \label{prop:nodal:stochasticvaluefuncbounds}
  Let $\Price=\sequence{\Price^\site}{\site \in \SITE}\in\CONESTO^\star$
  be an admissible price coordination process,
  and let $\Alloc=\sequence{\Alloc^\site}{\site\in \SITE} \in -\CONESTO$
  be an admissible resource coordination process.
  Then, for all~$t\in\ic{0,\final}$ and
  for all $x_t = \sequence{x_t^\site}{\site \in \SITE} \in \XX_t$,
  we have the inequalities
  \begin{equation}
    \label{eq:nodal:stochasticvaluefuncbounds}
    \sum_{\site \in \SITE} \pvaluefunc_t^\site\nc{\Price^\site}(x_t^\site) \leq
    V_t(x_t) \leq
    \sum_{\site \in \SITE}  \qvaluefunc_t^\site\nc{\Alloc^\site}(x_t^\site) \eqfinp
  \end{equation}
\end{proposition}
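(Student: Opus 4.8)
The plan is to reduce Proposition~\ref{prop:nodal:stochasticvaluefuncbounds} to Proposition~\ref{prop:nodal:valuefunctionsbounds} by exhibiting Problem~\eqref{eq:nodal:vf} (for a fixed time $t$ and fixed initial state $x_t$) as an instance of the generic global problem~\eqref{eq:gen:genpb}. First I would fix $t \in \ic{0,\final}$ and $x_t = \sequence{x_t^\site}{\site \in \SITE} \in \XX_t$, and for each $\site \in \SITE$ set the abstract decision variable $z^\site$ to be the pair of processes $(\x^\site,\u^\site)$ ranging over the set $\cZ^\site$ of those processes that start at $x_t^\site$, obey the local dynamics~\eqref{eq:nodal:dynamic}, and satisfy the measurability constraint~\eqref{eq:nodal:measurability}. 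The local criterion $J^\site$ is then the expected local cost $\EE\bgc{\sum_{s=t}^{\final-1} L^\site_s(\va X_s^\site,\va U_s^\site,\va W^\site_{s+1}) + K^\site(\x_\horizon^\site)}$, the local resource space $\ALLOCATION^\site$ is the process space $\widetilde{\LL}(\Omega,\cF,\PP;\ALLOCATION^\site)$, the paired price space $\PRICE^\site$ is $\widetilde{\LL}^{\star}(\Omega,\cF,\PP;\PRICE^\site)$ with the duality pairing $\pscal{\Price^\site}{\Alloc^\site} = \EE\bc{\sum_{s=t}^{\final-1}\pscal{\Price_s^\site}{\Alloc_s^\site}}$, and the local constraint map $\vartheta^\site$ sends $(\x^\site,\u^\site)$ to the process $\sequence{\Theta_s^\site(\x_s^\site,\u_s^\site)}{s\in\ic{t,\final-1}}$. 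The coupling set is $\CONE = \CONESTO$ (restricted to time indices $s \in \ic{t,\final-1}$).

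Next I would check that under this identification the objects match up: the global problem~\eqref{eq:gen:genpb} with these data is exactly Problem~\eqref{eq:nodal:vf}, since $\sum_{\site} J^\site(z^\site)$ is the global expected cost~\eqref{eq:nodal:expected_value} and the coupling constraint $\ba{\vartheta^\site(z^\site)}_{\site} \in -\CONESTO$ is, by definition~\eqref{eq:nodal:primaladmissibleset} of $\CONESTO$, precisely the almost-sure coupling constraint~\eqref{eq:nodal:couplingcons} for all $s \in \ic{t,\final-1}$. One also verifies directly from the definitions that the generic local price value function~\eqref{eq:nodal:genericdualvf} instantiates to $\pvaluefunc_t^\site\nc{\Price^\site}(x_t^\site)$ of~\eqref{eq:nodal:priceproblem-t} — the added term $\pscal{\price^\site}{\vartheta^\site(z^\site)}$ becomes $\EE\bc{\sum_{s=t}^{\final-1}\pscal{\Price_s^\site}{\Theta_s^\site(\x_s^\site,\u_s^\site)}}$ — and that the generic local resource value function~\eqref{eq:nodal:genericprimalvf} instantiates to $\qvaluefunc_t^\site\nc{\Alloc^\site}(x_t^\site)$ of~\eqref{eq:nodal:quantproblem-t}, the constraint $\vartheta^\site(z^\site) = \alloc^\site$ being the $\PP$-a.s. equality $\Theta_s^\site(\x_s^\site,\u_s^\site) = \Alloc_s^\site$ for all $s$. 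The dual cone $\CONESTO^\star$ of~\eqref{eq:nodal:dualadmissibleset} is then exactly the generic dual cone~\eqref{eq:nodal:dualcone} for this pairing, so an admissible price coordination process $\Price \in \CONESTO^\star$ is an admissible price vector and an admissible resource coordination process $\Alloc \in -\CONESTO$ is an admissible resource vector, in the sense of Proposition~\ref{prop:nodal:valuefunctionsbounds}. Applying that proposition then yields $\sum_{\site} \pvaluefunc_t^\site\nc{\Price^\site}(x_t^\site) \leq V_t(x_t) \leq \sum_{\site} \qvaluefunc_t^\site\nc{\Alloc^\site}(x_t^\site)$, which is~\eqref{eq:nodal:stochasticvaluefuncbounds}.

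The main obstacle is bookkeeping rather than mathematics: one must be careful that the product structure in Proposition~\ref{prop:nodal:valuefunctionsbounds} is over units $\site \in \SITE$, whereas the coupling set $\CONESTO$ is itself built as a product over time $s$ of the per-stage sets $\CONE_s$; so the abstract "$\CONE$" of~\S\ref{sec:nodal:generic} is the time-product $\prod_{s=t}^{\final-1}\CONE_s$ viewed as a subset of $\prod_{\site}\ALLOCATION^\site$, and one should check that the dual cone of this product decomposes stage by stage, which is immediate from~\eqref{eq:nodal:dualadmissibleset}. A secondary point to state cleanly is that the finiteness hypotheses required by Proposition~\ref{prop:nodal:valuefunctionsbounds} — namely $\pvaluefunc_t^\site\nc{\Price^\site}(x_t^\site) < +\infty$ and $\qvaluefunc_t^\site\nc{\Alloc^\site}(x_t^\site) > -\infty$ — are exactly the standing assumptions already made just after~\eqref{eq:nodal:priceproblem-t} and~\eqref{eq:nodal:quantproblem-t}, so nothing new is needed. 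Finally, the case $t=\final$ is trivial since by the conventions $V_\final = \sum_\site K^\site = \sum_\site \pvaluefunc_\final^\site\nc{\Price^\site} = \sum_\site \qvaluefunc_\final^\site\nc{\Alloc^\site}$, so all three quantities coincide.
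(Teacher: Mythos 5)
Your proof is correct and follows essentially the same route as the paper: the paper likewise proves the result by applying Proposition~\ref{prop:nodal:valuefunctionsbounds} to Problem~\eqref{eq:nodal:vf}, noting for $t\geq 1$ that the restriction of an admissible coordination process to the reduced time interval $\ic{t,\final-1}$ remains admissible, which is the same observation you make when checking that $\CONESTO$ and $\CONESTO^\star$ decompose stage by stage. Your write-up simply makes the instantiation of the generic framework more explicit than the paper does.
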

\begin{proof}
  For~$t=0$, the proof of the following proposition is a direct
  application of Proposition~\ref{prop:nodal:valuefunctionsbounds}
  to Problem~\eqref{eq:nodal:vf}.
  For~$t \in \ic{1,\final\!-\!1}$,
  from the definitions~\eqref{eq:nodal:admissiblesets}
  of~$\CONESTO$ and~$\CONESTO^\star$, the assumption that
  $(\Alloc_0, \cdots, \Alloc_{\final-1})$
  (resp.~$(\Price_0, \cdots, \Price_{\final-1})$)
  is an admissible process implies that
  the reduced process $(\Alloc_t, \cdots, \Alloc_{\final-1})$
  (resp.~$(\Price_t, \cdots, \Price_{\final-1})$) is also admissible
  on the reduced time interval~$\ic{t,\final-1}$, hence the result
  by applying Proposition~\ref{prop:nodal:valuefunctionsbounds}.
\end{proof}

\section{Decomposition of Local Value Functions by Dynamic Programming}
\label{sec:genericdecomposition}

In~\S\ref{subsec:nodal:globalprocess}, we have obtained
upper and lower bounds of optimization problems by
spatial decomposition. We now give conditions under which
\emph{spatial decomposition} schemes can be made
\emph{compatible with temporal decomposition},
thus yielding a mix of spatial and temporal decompositions.

In~\S\ref{subsec:nodal:decomposedDPdeterministic}, we show
that the local price value functions~\eqref{eq:nodal:priceproblem-t}
and the local resource value functions~\eqref{eq:nodal:quantproblem-t}
can be computed by Dynamic Programming,
when price and resource processes are deterministic.
In~\S\ref{subsec:nodal:processdesign}, we sketch how to obtain
tighter bounds by appropriately choosing the deterministic price
and resource processes.
In~\S\ref{subsec:nodal:admissiblepolicy}, we show
how to use local price and resource value functions as
surrogates for the global Bellman value functions,
and then produce global admissible policies.
In~\S\ref{subsec:nodal:decentralizedinformation}, we analyze
the case of a \emph{decentralized information structure}.

In the sequel, we make the following key assumption.
\begin{assumption}
  \label{hyp:independent}
  The global uncertainty process $\np{\w_1, \cdots, \w_\final}$
  in \eqref{eq:nodal:generic:globalnoise}
  consists of stagewise independent random variables.
\end{assumption}

In the case where $\cG_t^\site=\cF_t$ for all~$t\in\ic{0,\final}$
and all~$\site \in \SITE$ (centralized information structure in~\S\ref{subsec:nodal:generic:globaldata}),
under Assumption~\ref{hyp:independent},
the global value functions~\eqref{eq:nodal:vf} satisfy
the Dynamic Programming equations \cite{carpentier2015stochastic}
\begin{subequations}
  \label{eq:globaldp}
  \begin{align}
    V_\final(x_\final)
    & = \sum_{\site \in \SITE} K^{\site}(x^{\site}_\final)
      \quad \text{ and, for \( t=\final\!-\!1, \ldots, 0 \),}
    \\
    V_t(x_t)
    & = \inf_{u_t \in \UU_t} \EE
      \bgc{\sum_{\site \in \SITE} L_t^{\site}(x_t^{\site}, u_t^{\site}, \w^{\site}_\post) +
      V_\post\bp{\sequence{\x_\post^\site}{\site \in \SITE}}}
    \\
    & \hphantom{u_t \in \UU_t} \st\ \;
      \x_\post^{\site} = \dynamic_t^{\site}(x_t^{\site}, u_t^{\site}, \w^{\site}_\post) \eqfinv \\
    & \hphantom{u_t \in \UU_t \st\ } \;
      \ba{\Theta_t^\site(x_t^\site, u_t^\site)}_{\site \in \SITE} \in -\CONE_t  \eqfinp
  \end{align}
\end{subequations}
In the case where $\cG_t^{\site}=\cF_t^{\site}$ for all~$t\in\ic{0,\final}$
and all~$\site \in \SITE$ (decentralized information structure
in~\S\ref{subsec:nodal:generic:globaldata}), the common assumptions
under which the global value functions~\eqref{eq:nodal:vf} satisfy
Dynamic Programming equations are not met.

\subsection{Decomposed Value Functions by  Deterministic Coordination Processes}
\label{subsec:nodal:decomposedDPdeterministic}

We prove now that, for deterministic coordination processes,
the local problems~\eqref{eq:nodal:priceproblem-t} and
\eqref{eq:nodal:quantproblem-t} satisfy local Dynamic Programming
equations.

We first study the local price value function~\eqref{eq:nodal:priceproblem-t}.
\begin{proposition}
  \label{prop:nodal:dppriceconstant}
  Let $\price^{\site}= (\price_0^{\site}, \cdots, \price_{\final-1}^{\site})
  \in \PRICE^{\site}$ be a deterministic price process.
  Then, be it for the centralized or the decentralized information
  structure (see \S\ref{subsec:nodal:generic:globaldata}),
  the local price value functions~\eqref{eq:nodal:priceproblem-t}
  satisfy the following recursive Dynamic Programming equations
  \begin{subequations}
    \label{eq:localdp}
    \begin{align}
      \pvaluefunc_\final^{\site}\nc{\price^{\site}}(x_\final^{\site}) =
      & \; K^{\site}(x_\final^{\site})
        \quad \text{and, for \( t=\final\!-\!1, \ldots, 0 \),}
      \\
      \pvaluefunc_t^{\site}\nc{\price^{\site}}(x_t^{\site}) =
      & \inf_{u_t^\site \in \UU_t^{\site}} \;
        \EE \Big[ L_t^{\site}(x_t^{\site}, u_t^{\site}, \w_\post^{\site}) +
        \pscal{\price_t^{\site}}{\Theta_t^{\site}(x_t^{\site}, u_t^{\site})}
      \\
      & \hspace{3.5cm} + \pvaluefunc_\post^{\site}\nc{\price^{\site}}
        \bp{\dynamic_t^{\site}(x_t^{\site}, u_t^{\site},
        \w_\post^{\site})} \Big]
        \eqfinp
        \nonumber
    \end{align}
  \end{subequations}
\end{proposition}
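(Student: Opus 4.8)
The plan is to show that the local price value function defined by the "flat" formulation \eqref{eq:nodal:priceproblem-t} coincides with the function produced by the backward recursion \eqref{eq:localdp}. The key point is that, once the price process $\price^\site$ is \emph{deterministic}, the quantity $\pscal{\price_s^\site}{\Theta_s^\site(\x_s^\site,\u_s^\site)}$ is just an ordinary additive running cost term, measurable with respect to the same information as the original instantaneous cost $L_s^\site$. Consequently Problem~\eqref{eq:nodal:priceproblem-t} is a \emph{standard} local multistage stochastic control problem for unit~$\site$, with modified instantaneous cost
\[
  \widetilde L_t^\site(x,u,w) \;=\; L_t^\site(x,u,w) + \pscal{\price_t^\site}{\Theta_t^\site(x,u)}\eqfinv
\]
dynamics $\dynamic_t^\site$, final cost $K^\site$, and the only constraint being the measurability constraint~\eqref{eq:nodal:measurability}. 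The crucial structural fact is that the coupling constraint~\eqref{eq:nodal:couplingcons} has been dualized away and does \emph{not} appear in~\eqref{eq:nodal:priceproblem-t}: hence there is no constraint linking different units, and the problem genuinely lives on the single unit~$\site$, with noise process $\w^\site=(\w_1^\site,\ldots,\w_\final^\site)$.

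\textbf{Main steps.} First I would note that Assumption~\ref{hyp:independent} (stagewise independence of the global noise $(\w_1,\ldots,\w_\final)$) implies stagewise independence of the marginal local noise process $(\w_1^\site,\ldots,\w_\final^\site)$, since a subvector of an independent family is independent. Second, I would observe that in both information structures the constraint~\eqref{eq:nodal:measurability} requires $\u_s^\site$ to be measurable with respect to a $\sigma$-field ($\cF_s$ in the centralized case, $\cF_s^\site$ in the decentralized case) that contains $\cF_s^\site=\sigma(\w_1^\site,\ldots,\w_s^\site)$; but since $\widetilde L_s^\site$, $\dynamic_s^\site$, $K^\site$ and the dynamics depend only on the local noise $\w^\site$, one can reduce any feasible control to its conditional expectation given $\cF_s^\site$ without worsening the cost (or, more simply, invoke the fact that for a Markov problem driven by $\w^\site$ with stagewise independent noise, an optimal control can be taken in feedback form $\u_s^\site=\varphi_s^\site(\x_s^\site)$, which is admissible under either information structure since $\cF_s^\site\subset\cG_s^\site$). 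This is the step that makes the statement uniform over the two information structures. Third, with the problem reduced to a classical single-unit Markov decision problem with additive costs, stagewise independent noise, and feedback-implementable optimal policies, I would invoke the standard Dynamic Programming theorem (as in~\cite{carpentier2015stochastic}, and exactly in the form already quoted in the excerpt for the global problem~\eqref{eq:globaldp}) applied to unit~$\site$ alone with running cost $\widetilde L_t^\site$. Reading off the resulting Bellman recursion and substituting back $\widetilde L_t^\site=L_t^\site+\pscal{\price_t^\site}{\Theta_t^\site}$ gives precisely~\eqref{eq:localdp}, together with the terminal condition $\pvaluefunc_\final^\site\nc{\price^\site}=K^\site$, which is the stated convention.

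\textbf{Anticipated difficulty.} The routine part is the invocation of the standard DP theorem; the delicate part is the second step — justifying that the measurability constraint~\eqref{eq:nodal:measurability} can be handled identically in the centralized and decentralized settings. In the centralized case the admissible controls form a strictly larger set (they may depend on noises from \emph{other} units), so one must argue that this extra freedom is useless: since neither the cost $\widetilde L^\site$, nor the dynamics, nor the terminal cost of unit~$\site$ depends on $\w^{\site'}$ for $\site'\neq\site$, and since the other units' noises are independent of $\w^\site$ under Assumption~\ref{hyp:independent}, conditioning (or a tower-property / Jensen argument if one wishes to keep convexity, or simply the existence of an optimal Markov feedback) shows the optimal value is the same as when controls are restricted to depend on $\w^\site$ only. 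Once this reduction is in place, the two information structures give literally the same reduced problem, and a single application of Dynamic Programming settles both cases at once. I would also keep the standing hypothesis $\pvaluefunc_t^\site\nc{\price^\site}(x_t^\site)<+\infty$ in mind so that the infima and expectations in~\eqref{eq:localdp} are well defined at each stage.
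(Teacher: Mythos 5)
Your proof follows essentially the same route as the paper's: absorb the dualized price term into the running cost, observe that both information structures reduce to the same single-unit problem driven by $\w^\site$ alone, and invoke the standard Dynamic Programming theorem (the paper states the reduction explicitly for the decentralized case and applies DP directly in the centralized case). One small correction to your justification of why the centralized information is useless: Assumption~\ref{hyp:independent} asserts independence of the global noise vectors \emph{across time}, not independence of $\w^{\site'}$ from $\w^{\site}$ across units (spatial independence is an additional hypothesis that only appears in Corollary~\ref{cor:upperequalD}); the correct reason is that, by stagewise independence in time, the future local noises $(\w_{s+1}^{\site},\ldots,\w_{\final}^{\site})$ are independent of the whole past $\sigma$-field $\cF_s$, so conditioning on other units' past noises does not alter their distribution and the extra freedom of the centralized structure cannot improve the local value. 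With that fix your argument is complete and matches the paper's.
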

\begin{proof}
  Let $\price^{\site} = (\price_0^{\site},\cdots,\price_{\final-1}^{\site})\in \PRICE^{\site}$
  be a deterministic price vector. Then, the price value
  function~\eqref{eq:nodal:priceproblem-t} has the following expression:
  \begin{align}
    \underline V^{\site}_0\nc{\price^{\site}}(x_0^{\site}) = \inf_{\x^{\site}, \u^{\site}}
    & \EE \bigg[\sum_{t=0}^{\final-1}
      L^{\site}_t(\va X_t^{\site}, \va U_t^{\site}, \va W^{\site}_{t+1}) \nonumber
    \\
    & \hspace{2.0cm} + \pscal{\price_t^{\site}}{\Theta_t^{\site}(\x_t^{\site},\u_t^{\site})}
      + K^{\site}(\x_\horizon^{\site}) \bigg]
      \eqfinv
      \label{eq:nodal:priceproblem-deter}
    \\
    \st
    & \;\x_0^{\site} = x_0^{\site}\text{\, and \,}
      \FORALLTIMES{s}{0}{\final\!-\!1},
      \eqref{eq:nodal:dynamic}, \eqref{eq:nodal:measurability} \nonumber
      \eqfinp
  \end{align}
  In the case where~$\cG_t^{\site}=\cF_t$, and as
  Assumption~\ref{hyp:independent} holds true, the optimal value
  of Problem~\eqref{eq:nodal:priceproblem-deter} can be obtained
  by the recursive Dynamic Programming equations~\eqref{eq:localdp}.

  Consider now the case~$\cG_t^{\site}=\cF_t^{\site}$. Since the local
  value function and local dynamics in~\eqref{eq:nodal:priceproblem-deter}
  only depend on the local noise process~$\w^{\site}$, there is no loss of
  optimality to replace the constraint $\sigma(\u_t^{\site}) \subset \cF_t$
  by $\sigma(\u_t^{\site}) \subset \cF_t^{\site}$.
  Moreover,  Assumption~\ref{hyp:independent} implies that the local
  uncertainty process $(\w_{1}^{\site},\dots,\w_{\final}^{\site})$ consists
  of stagewise independent random variables, so that the solution of
  Problem~\eqref{eq:nodal:priceproblem-deter} can be obtained
  by the recursive Dynamic Programming equations~\eqref{eq:localdp}
  when replacing the \emph{global} $\sigma$-field~$\cF_t$
  by the \emph{local} $\sigma$-field~$\cF_t^{\site}$
  (see Equation~\eqref{eq:nodal:infos}).
\end{proof}

A similar result holds true for the local resource value
functions~\eqref{eq:nodal:quantproblem-t} as stated now in Proposition~\ref{prop:nodal:dpquantconstant}
whose proof is left to the reader.
\begin{proposition}
  \label{prop:nodal:dpquantconstant}
  Let $\alloc^{\site}= (\alloc_0^{\site}, \cdots, \alloc_{\final-1}^{\site})
  \in \ALLOCATION^{\site}$ be a deterministic resource process.
  Then, be it for the centralized or the decentralized information structure
  in~\S\ref{subsec:nodal:generic:globaldata},
  the local resource value functions~\eqref{eq:nodal:quantproblem-t}
  satisfy the following recursive Dynamic Programming equations
  \begin{subequations}
    \begin{align}
      \qvaluefunc_\final^{\site}\nc{\alloc^{\site}}(x_\final^{\site}) =
      & K^{\site}(x_\final^{\site})
        \quad \text{and, for \( t=\final\!-\!1, \ldots, 0 \),}
      \\
      \qvaluefunc_t^{\site}\nc{\alloc^{\site}}(x_t^{\site}) =
      & \inf_{u_t^\site \in \UU_t^{\site}} \EE \Bc{L_t^{\site}(x_t^{\site}, u_t^{\site}, \w_\post^{\site})
        + \qvaluefunc_\post^{\site}\nc{\alloc^{\site}}
        \bp{\dynamic_t^{\site}(x_t^{\site}, u_t^{\site}, \w_\post^{\site})}}
        \eqfinv
        \nonumber
      \\
      & \st\ \;\; \Theta_t^{\site}(x_t^{\site}, u_t^{\site}) = \alloc_t^{\site}
        \eqfinp
    \end{align}
    \label{eq:nodal:localdpquant}
  \end{subequations}
\end{proposition}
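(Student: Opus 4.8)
The plan is to mimic exactly the proof of Proposition~\ref{prop:nodal:dppriceconstant}, adapting it to the resource-constrained local problem~\eqref{eq:nodal:quantproblem-t}. First I would write out the local resource value function $\qvaluefunc_0^{\site}\nc{\alloc^{\site}}(x_0^{\site})$ explicitly at $t=0$: it is the infimum over $(\x^{\site},\u^{\site})$ of $\EE\bgc{\sum_{t=0}^{\final-1} L_t^{\site}(\x_t^{\site},\u_t^{\site},\w_\post^{\site}) + K^{\site}(\x_\final^{\site})}$ subject to the initial condition $\x_0^{\site}=x_0^{\site}$, the local dynamics~\eqref{eq:nodal:dynamic}, the measurability constraints~\eqref{eq:nodal:measurability}, and the additional pointwise equality constraint $\Theta_t^{\site}(\x_t^{\site},\u_t^{\site}) = \alloc_t^{\site}$ for all $t\in\ic{0,\final-1}$. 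The crucial observation — which is what makes the Dynamic Programming principle apply — is that since $\alloc^{\site}$ is \emph{deterministic}, the constraint $\Theta_t^{\site}(\x_t^{\site},\u_t^{\site}) = \alloc_t^{\site}$ is, at each stage $t$, a constraint depending only on the current local state $x_t^{\site}$ and the current local control $u_t^{\site}$; it involves no coupling across stages and no coupling across units, and it adds no new randomness. Hence it can be absorbed, stage by stage, into an (extended-real-valued) local instantaneous cost exactly as the linear penalty $\pscal{\price_t^{\site}}{\Theta_t^{\site}(x_t^{\site},u_t^{\site})}$ was in the price case.

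Concretely, I would define, for each $t$, the modified instantaneous cost
\[
  \widetilde L_t^{\site}(x_t^{\site},u_t^{\site},w_\post^{\site})
  = L_t^{\site}(x_t^{\site},u_t^{\site},w_\post^{\site})
  + \chi_{\{\Theta_t^{\site}(x_t^{\site},u_t^{\site}) = \alloc_t^{\site}\}},
\]
where $\chi$ denotes the $\{0,+\infty\}$-valued indicator of the constraint set; since $L_t^{\site}$ already takes values in $\OpenIntervalClosed{-\infty}{+\infty}$, so does $\widetilde L_t^{\site}$, and the local problem~\eqref{eq:nodal:quantproblem-t} becomes a standard local multistage stochastic control problem with instantaneous costs $\widetilde L_t^{\site}$, final cost $K^{\site}$, dynamics $\dynamic_t^{\site}$, and information $\cG_t^{\site}$. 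Then I would invoke Assumption~\ref{hyp:independent}: in the centralized case $\cG_t^{\site}=\cF_t$, stagewise independence of $\np{\w_1,\dots,\w_\final}$ gives the Dynamic Programming decomposition directly (as in~\cite{carpentier2015stochastic}); in the decentralized case $\cG_t^{\site}=\cF_t^{\site}$, I would first note — exactly as in the proof of Proposition~\ref{prop:nodal:dppriceconstant} — that because the cost $\widetilde L_t^{\site}$, the dynamics $\dynamic_t^{\site}$ and the final cost $K^{\site}$ depend only on the local noise process $\w^{\site}$, there is no loss of optimality in restricting the controls to be $\cF_t^{\site}$-measurable, and that Assumption~\ref{hyp:independent} forces the local process $\np{\w_1^{\site},\dots,\w_\final^{\site}}$ to be stagewise independent as well, so Dynamic Programming applies with $\cF_t$ replaced by $\cF_t^{\site}$. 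Unfolding the DP equations for $\widetilde L_t^{\site}$ and pushing the indicator back out as the explicit constraint $\st\ \Theta_t^{\site}(x_t^{\site},u_t^{\site}) = \alloc_t^{\site}$ yields precisely~\eqref{eq:nodal:localdpquant}.

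I do not anticipate a genuine obstacle here — this is the promised ``left to the reader'' twin of Proposition~\ref{prop:nodal:dppriceconstant}. The only point deserving care is the handling of the equality constraint: one must check that absorbing it into $\widetilde L_t^{\site}$ preserves whatever measurability/integrability hypotheses underlie the cited Dynamic Programming principle (the indicator is measurable since $\Theta_t^{\site}$ is, and the hypothesis $\qvaluefunc_t^{\site}\nc{\alloc^{\site}}(x_t^{\site}) > -\infty$ stated after~\eqref{eq:nodal:quantproblem-t} rules out the pathological $(-\infty)+(+\infty)$ situation), and that the infimum over an empty feasible set — should $\Theta_t^{\site}(x_t^{\site},\cdot)$ never equal $\alloc_t^{\site}$ — is consistently read as $+\infty$ on both sides of~\eqref{eq:nodal:localdpquant}, which is automatic with the indicator-cost formulation.
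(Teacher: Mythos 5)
Your proposal is correct and is exactly the intended ``left to the reader'' argument: it replicates the proof of Proposition~\ref{prop:nodal:dppriceconstant} (centralized case via Assumption~\ref{hyp:independent}, decentralized case via the observation that the local problem depends only on the local noise), with the equality constraint absorbed into an extended-real-valued stage cost --- a device the paper itself sanctions when it states that local constraints are incorporated into $L_t^\site$ and $K^\site$. Your attention to the empty-feasible-set and measurability caveats is appropriate but not a departure from the paper's route.
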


\subsection{Computing Upper and Lower Bounds, and Decomposed Value Functions}
\label{subsec:nodal:processdesign}

In the context of a deterministic \emph{admissible} price coordination process
$\price^{\site} = (\price_0^{\site},\cdots,\price_{\final-1}^{\site})\in \CONE^\star$
and resource process
$\alloc^{\site}= (\alloc_0^{\site}, \cdots, \alloc_{\final-1}^{\site}) \in \CONE$,
where~$\CONE$ is defined in~\eqref{eq:nodal:global_constraint_set}, the double inequality
\eqref{eq:nodal:stochasticvaluefuncbounds}
in Proposition~\ref{prop:nodal:stochasticvaluefuncbounds} becomes
\begin{equation}
  \label{eq:nodal:boundsvaluefunctiondeterministic}
  \sum_{\site \in \SITE} \pvaluefunc_t^{\site}\nc{\price^{\site}}(x_t^{\site})
  \leq V_t(x_t)\leq
  \sum_{\site \in \SITE} \qvaluefunc_t^{\site}\nc{\alloc^{\site}}(x_t^{\site})
  \eqfinp
\end{equation}
\begin{itemize}
\item
  Both in the lower bound and the upper bound of~$V_t$
  in~\eqref{eq:nodal:boundsvaluefunctiondeterministic},
  the sum over units~$\site\in\SITE$ materializes the spatial decomposition
  for the computation of the bounds. For each of the bounds, this
  decomposition leads to independent optimization subproblems
  that can be processed in parallel.
\item
  For a given unit~$\site\in\SITE$,
  the computation of the local value functions~$\pvaluefunc_t^{\site}\nc{\price^{\site}}$
  and~$\qvaluefunc_t^{\site}\nc{\alloc^{\site}}$ for $t \in \ic{0,T}$
  can be performed by Dynamic Programming
  as stated in Propositions
  \ref{prop:nodal:dppriceconstant} and~\ref{prop:nodal:dpquantconstant}.
  The corresponding loop in backward time materializes the temporal
  decomposition, processed sequentially.
\end{itemize}

Now, we suppose given an initial state
$x_0 = \sequence{x_0^\site}{\site \in \SITE} \in \XX_0$
and we sketch how, by suitably choosing the admissible coordination processes,
we can improve
the upper and lower bounds~\eqref{eq:nodal:boundsvaluefunctiondeterministic}
for~$V_0\np{x_0}$, that is, the optimal value of Problem~\eqref{eq:nodal:vf} for~$t=0$.

By Propositions \ref{prop:nodal:stochasticvaluefuncbounds}
and~\ref{prop:nodal:dppriceconstant}, for any deterministic
$\price=(\price_0,\cdots,\price_{\final-1}) \in \CONE^\star$,
we have
\( \sum_{\site \in \SITE} \pvaluefunc_0^{\site}\nc{\price^{\site}}(x_0^{\site})
\; \leq \; V_0(x_0) \).
As a consequence, solving the following optimization problem
\begin{equation}
  \sup_{\price \in \CONE^\star} \sum_{\site \in \SITE}
  \underline V^{\site}_0\nc{\price^{\site}}(x_0^{\site})
  \label{eq:nodal:relaxedconstraintdual}
\end{equation}
gives the greatest possible lower bound in the class
of deterministic admissible price coordination processes.
We can maximize Problem~\eqref{eq:nodal:relaxedconstraintdual}
\wrt~$\price$ using a gradient-like ascent algorithm.
Updating $\price$ requires the computation of the gradient of
$\sum_{\site \in \SITE} \pvaluefunc_0^{\site}\nc{\price^{\site}}(x_0^{\site})$,
obtained when computing the price value functions.
The standard update formula corresponding to the gradient algorithm
(Uzawa algorithm) can be replaced by more sophisticated methods (Quasi-Newton).

By Propositions \ref{prop:nodal:stochasticvaluefuncbounds}
and~\ref{prop:nodal:dpquantconstant}, for any
deterministic $\alloc=(\alloc_0,\cdots,\alloc_{\final-1}) \in -\CONE$,
we have
\(  V_0\bp{\sequence{x_0^\site}{\site \in \SITE}} \; \leq \;
\sum_{\site \in \SITE} \qvaluefunc_0^{\site}\nc{\alloc^{\site}}(x_0^{\site})
\).
As a consequence, solving the following optimization problem
\begin{equation}
  \label{eq:nodal:overconstraint}
  \inf_{\alloc\in -\CONE} \sum_{\site \in \SITE}
  \qvaluefunc_0^{\site}\nc{\alloc^{\site}}(x_0^{\site})
\end{equation}
gives the lowest possible upper bound in the set
of deterministic admissible resource coordination processes.
Again, we can minimize Problem~\eqref{eq:nodal:overconstraint}
\wrt~$\alloc$ using a gradient-like algorithm.
Updating $\alloc$ requires the computation of the gradient of
$\sum_{\site \in \SITE} \qvaluefunc_0^{\site}\nc{\alloc^{\site}}(x_0^{\site})$, obtained
when computing the resource value functions.
Again, the standard update formula corresponding to the gradient
algorithm can be replaced by more sophisticated methods.

At the end of the procedure, we have obtained
a deterministic admissible price coordination process
$\price=(\price_0,\cdots,\price_{\final-1}) \in \CONE^\star$
and a deterministic admissible resource coordination process
$\alloc=(\alloc_0,\cdots,\alloc_{\final-1}) \in -\CONE$
such that~$V_0\np{x_0}$,
the optimal value of Problem~\eqref{eq:nodal:vf} for~$t=0$,
is tightly bounded above and below like
in~\eqref{eq:nodal:boundsvaluefunctiondeterministic}
for~$t=0$. We have also obtained the solutions
$\na{\pvaluefunc_t^{\site}\nc{\price}}_{t\in \ic{0, \final}}$
and
$\na{\qvaluefunc_t^{\site}\nc{\alloc}}_{t\in \ic{0, \final}}$
of the recursive Dynamic Programming Equations~\eqref{eq:localdp} and~\eqref{eq:nodal:localdpquant}
associated with these coordination processes.

\subsection{Devising Policies}
\label{subsec:nodal:admissiblepolicy}

Now that we have decomposed value functions,
we show how to devise policies.
By \emph{policy}, we mean a sequence
\( \feedback = \ba{\feedback_{t}}_{t\in \ic{0, \final-1}} \) where,
for any \( t\in\ic{0,\final{-}1} \),
each $\feedback_t$ is a \emph{state feedback}, that is,
a measurable mapping \( \feedback_t : \XX_t\to\UU_t \).

Here, we suppose that we have at our disposal pre-computed \emph{local}
value functions $\na{\underline V_t^{\site}}_{t\in \ic{0, \final}}$
and $\na{\overline V_t^{\site}}_{t\in \ic{0, \final}}$ solving
Equations~\eqref{eq:localdp} for the price value functions
and Equations~\eqref{eq:nodal:localdpquant} for the resource
value functions.
For instance, one could use the functions
$\na{\pvaluefunc_t^{\site}\nc{\price}}_{t\in \ic{0, \final}}$
and
$\na{\qvaluefunc_t^{\site}\nc{\alloc}}_{t\in \ic{0, \final}}$
obtained at the end of~\S\ref{subsec:nodal:processdesign}.

Using the sum of these local value functions
as a surrogate for a global Bellman value function,
we propose two \emph{global} policies as follows
(supposing that the $\argmin$ are not empty and that the resulting expressions
provide measurable mappings \cite{bertsekas-shreve:1996}):

\noindent
1) a \emph{global price policy}
\( \underline \feedback =
\ba{\underline\feedback_{t}}_{t\in \ic{0, \final-1}} \)
with, for any \( t\in\ic{0,\final{-}1} \), the feedback
$\underline \feedback_t:\XX_t\to\UU_t$ defined
for all $x_t = \sequence{x_t^\site}{\site \in \SITE} \in \XX_t$ by
\begin{align}
  \underline\feedback_t(x_t) \in \argmin_{\sequence{u_t^\site}{\site \in \SITE}}
  & \; \EE\bgc{\sum_{\site \in \SITE} L_t^{\site}(x_t^{\site}, u_t^{\site}, \w_\post^{\site}) +
    \underline V_\post^{\site}\bp{g_t^{\site}(x_t^{\site}, u_t^{\site},
    \w_\post^{\site})}} \eqsepv
    \nonumber \\
  \st\
  & \; \ba{\Theta_t^\site(x_t^\site, u_t^\site)}_{\site \in \SITE}
    \in -\CONE_t
    \eqfinv
    \label{eq:nodal:globalpricepolicy}
\end{align}
2) a \emph{global resource policy}
\( \overline \feedback =
\ba{\overline\feedback_{t}}_{t\in \ic{0, \final-1}} \)
with, for any \( t \in \ic{0,\final{-}1} \), the feedback
$\overline \feedback_t: \XX_t \to \UU_t$ defined
for all $x_t = \sequence{x_t^\site}{\site \in \SITE} \in \XX_t$ by
\begin{align}
  \overline\feedback_t(x_t) \in \argmin_{\sequence{u_t^\site}{\site \in \SITE}}
  & \; \EE\bgc{\sum_{\site \in \SITE} L_t^{\site}(x_t^{\site}, u_t^{\site}, \w_\post^{\site}) +
    \overline V_\post^{\site}\bp{g_t^{\site}(x_t^{\site}, u_t^{\site},
    \w_\post^{\site})}}
    \eqfinv
    \nonumber \\
  \st\
  & \; \ba{\Theta_t^\site(x_t^\site, u_t^\site)}_{\site \in \SITE}
    \in -\CONE_t
    \eqfinp
    \label{eq:nodal:globalresourcepolicy}
\end{align}
Given a policy \( \feedback = \ba{\feedback_{t}}_{t\in \ic{0, \final-1}} \)
and any time $t \in \ic{0, \final}$, the expected cost of policy
$\feedback$ starting from state~$x_t$ at time~$t$ is equal to
\begin{align}
  V_t^\feedback(x_t) =
  & \; \EE\bgc{\sum_{\site \in \SITE} \sum_{s=t}^{\final-1}
    L_s^{\site}(\x_s^{\site},\feedback_s^{\site}(\x_s),\w_\post^{\site}) +
    K^{\site}(\x_\final^{\site})}
    \eqfinv
    \label{eq:nodal:costpolicy}
  \\
  \st\
  & \FORALLTIMES{s}{t}{\final\!-\!1} \eqsepv 
      \x_{s+1}^{\site} = \dynamic_s^{\site}(\x^{\site}_s, \feedback_s^{\site}(\x_s), \w_{s+1}^{\site})
      \eqsepv \x_t^{\site} = x_t^{\site}
      \eqfinp
      \nonumber
\end{align}
We provide several bounds hereafter.

\begin{proposition}
  \label{prop:nodal:boundresourcepolicy}
  Let $t \in \ic{0, \final}$ and $x_t = \sequence{x_t^\site}{\site \in \SITE} \in \XX_t$
  be a given state. Then, we have
  \begin{subequations}
    \begin{align}
      \sum_{\site \in \SITE} \pvaluefunc_t^{\site}(x_t^{\site}) \leq V_t(x_t)
      & \leq
        V_t^{\overline \feedback}(x_t)
        \leq \sum_{\site \in \SITE} \qvaluefunc_t^{\site}(x_t^{\site})
        \eqfinv
        \label{eq:nodal:boundresourcepolicy_a}
      \\
      V_t(x_t)
      & \leq \inf
        \ba{V_t^{\underline \feedback}(x_t),V_t^{\overline \feedback}(x_t)}
        \eqfinp
    \end{align}
    \label{eq:nodal:boundresourcepolicy}
  \end{subequations}
\end{proposition}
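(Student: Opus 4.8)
The plan is to establish the four inequalities in \eqref{eq:nodal:boundresourcepolicy_a} from left to right, reusing the machinery already in place, and then deduce \eqref{eq:nodal:boundresourcepolicy} as an immediate corollary. The leftmost inequality $\sum_{\site} \pvaluefunc_t^{\site}(x_t^{\site}) \leq V_t(x_t)$ and the rightmost inequality $V_t^{\overline\feedback}(x_t) \leq \sum_{\site} \qvaluefunc_t^{\site}(x_t^{\site})$ are the two ``bracketing'' bounds; the middle inequality $V_t(x_t) \leq V_t^{\overline\feedback}(x_t)$ is the soft statement that any admissible policy costs at least the optimal value. So the real content is: (i) the left bound, which is just the lower bound half of Proposition~\ref{prop:nodal:stochasticvaluefuncbounds} applied with the admissible deterministic price process $\price$ from \S\ref{subsec:nodal:processdesign}; and (iii) the right bound, which requires showing that the resource policy $\overline\feedback_t$ of \eqref{eq:nodal:globalresourcepolicy} is \emph{admissible} for Problem~\eqref{eq:nodal:vf} and that its cost is dominated by $\sum_{\site}\qvaluefunc_t^{\site}$.

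First I would dispatch the left inequality by citing Proposition~\ref{prop:nodal:stochasticvaluefuncbounds} for the deterministic admissible price coordination process, noting that a deterministic $\price \in \CONE^\star$ is in particular in $\CONESTO^\star$. The middle inequality $V_t(x_t) \le V_t^{\overline\feedback}(x_t)$ follows once we check that the state/control trajectory generated by $\overline\feedback$ is feasible for the infimum defining $V_t(x_t)$ in \eqref{eq:nodal:vf}: the dynamics \eqref{eq:nodal:dynamic} hold by construction of $V_t^{\overline\feedback}$ in \eqref{eq:nodal:costpolicy}; the measurability constraint \eqref{eq:nodal:measurability} holds because $\overline\feedback_t$ is a state feedback and the state at time $s$ is $\cG_s$-measurable by induction; and the coupling constraint \eqref{eq:nodal:couplingcons} holds because the $\argmin$ in \eqref{eq:nodal:globalresourcepolicy} is taken precisely over controls satisfying $\ba{\Theta_t^\site(x_t^\site,u_t^\site)}_\site \in -\CONE_t$. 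Hence $V_t^{\overline\feedback}(x_t)$ is the value of a feasible point of the minimization \eqref{eq:nodal:vf}, so $V_t(x_t) \le V_t^{\overline\feedback}(x_t)$.

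The crux is the right inequality $V_t^{\overline\feedback}(x_t) \le \sum_{\site}\qvaluefunc_t^{\site}(x_t^{\site})$. I would prove this by backward induction on $t$, using the local Dynamic Programming equations \eqref{eq:nodal:localdpquant} from Proposition~\ref{prop:nodal:dpquantconstant}. At $t=\final$ both sides equal $\sum_\site K^\site(x_\final^\site)$. For the induction step, write $V_t^{\overline\feedback}(x_t) = \EE\bgc{\sum_\site L_t^\site(x_t^\site, \overline\feedback_t^\site(x_t), \w_\post^\site) + V_\post^{\overline\feedback}\bp{\dynamic_t^\site(\cdot)}}$ by the cost recursion implicit in \eqref{eq:nodal:costpolicy}; apply the induction hypothesis $V_\post^{\overline\feedback} \le \sum_\site \qvaluefunc_\post^\site$ inside the expectation; then observe that the control $\overline\feedback_t(x_t)$ attaining the global $\argmin$ in \eqref{eq:nodal:globalresourcepolicy} has, for each $\site$, $\Theta_t^\site(x_t^\site, \overline\feedback_t^\site(x_t)) = \alloc_t^\site$ when the surrogate $\overline V_\post^\site = \qvaluefunc_\post^\site\nc{\alloc^\site}$ is the resource value function built from the very process $\alloc$ of \S\ref{subsec:nodal:processdesign} — so each summand is feasible for the constrained local minimization \eqref{eq:nodal:localdpquant}, whence $\EE\bc{L_t^\site + \qvaluefunc_\post^\site\nc{\alloc^\site}(\dynamic_t^\site(\cdot))} \ge \qvaluefunc_t^\site\nc{\alloc^\site}(x_t^\site)$ gives the wrong direction; one must instead argue that choosing \emph{per-unit} the local optimal control $u_t^\site$ achieving $\qvaluefunc_t^\site\nc{\alloc^\site}(x_t^\site)$ yields an admissible global control (since $\ba{\alloc_t^\site}_\site \in -\CONE_t$), so the \emph{global} $\argmin$ in \eqref{eq:nodal:globalresourcepolicy} does no worse: $V_t^{\overline\feedback}(x_t) \le \sum_\site \EE\bc{L_t^\site(x_t^\site,u_t^\site,\w_\post^\site) + \qvaluefunc_\post^\site\nc{\alloc^\site}(\dynamic_t^\site(x_t^\site,u_t^\site,\w_\post^\site))} = \sum_\site \qvaluefunc_t^\site\nc{\alloc^\site}(x_t^\site)$. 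I expect this decoupling-of-the-argmin argument to be the main obstacle: one has to be careful that the global policy, which optimizes a \emph{sum} subject to a \emph{joint} constraint, is compared against the separable upper bound obtained by the \emph{unit-wise} resource-constrained optimizers, and that the latter collection is globally feasible precisely because $\alloc \in -\CONE$. The $\overline\feedback$-based bound in the statement must therefore be read with $\overline V_\post^\site$ equal to $\qvaluefunc_\post^\site\nc{\alloc^\site}$; I would state that identification explicitly at the start of the proof. Finally, \eqref{eq:nodal:boundresourcepolicy} follows since $V_t(x_t)\le V_t^{\overline\feedback}(x_t)$ is part (a) and $V_t(x_t)\le V_t^{\underline\feedback}(x_t)$ holds by the same feasibility argument applied to the price policy $\underline\feedback$, so $V_t(x_t)$ is below the infimum of the two.
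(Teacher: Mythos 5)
Your proposal is correct and follows essentially the same route as the paper: the left and middle inequalities come from Proposition~\ref{prop:nodal:stochasticvaluefuncbounds} and the admissibility of the policy, and the key right-hand inequality is proved by backward induction using the local Dynamic Programming equations~\eqref{eq:nodal:localdpquant} and the decoupling induced by the deterministic admissible resource process $\alloc\in-\CONE$. The only cosmetic difference is that the paper performs the decoupling by restricting the joint constraint $\ba{\Theta_t^\site(x_t^\site,u_t^\site)}_{\site\in\SITE}\in-\CONE_t$ to the equalities $\Theta_t^\site(x_t^\site,u_t^\site)=r_t^\site$ (comparing infima over nested feasible sets, which avoids your implicit need for the local argmins to be attained), whereas you exhibit the unit-wise optimizers as a globally feasible point --- the two are interchangeable.
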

\begin{proof}
  We prove the right hand side inequality
  in~\eqref{eq:nodal:boundresourcepolicy_a} by backward induction.
  At time $t = \final$, the result is straightforward as
  $\qvaluefunc_t^{\site} = K^{\site}$ for all $\site \in \SITE$.
  Let $t \in \ic{0, \final-1}$ such that the right hand side inequality
  in~\eqref{eq:nodal:boundresourcepolicy_a} holds true at time $t+1$.
  Then, for all $x_t \in \XX_t$, Equation \eqref{eq:nodal:costpolicy}
  can be rewritten
  \begin{equation*}
    V_t^{\overline \feedback}(x_t) =
    \EE\bgc{\sum_{\site \in \SITE}\bp{ L_t^{\site}(x_t^{\site}, \overline \feedback_t^{\site}(x_t),
        \w_\post^{\site})} + V_\post^{\overline \feedback}(\x_\post) } \eqfinv
  \end{equation*}
  Using the induction assumption, we deduce that
  \begin{align*}
    V_t^{\overline \feedback}(x_t)
    &\leq
      \EE\bgc{\sum_{\site \in \SITE} L_t^{\site}(x_t^{\site}, \overline \feedback_t^{\site}(x_t),
      \w_\post^{\site}) + \qvaluefunc_\post^{\site}(\x_\post^{\site}) } \eqfinp\\
    \intertext{From the very definition~\eqref{eq:nodal:globalresourcepolicy}
    of the global resource policy, $\overline{\feedback}$ , we obtain}
    V_t^{\overline \feedback}(x_t)
    & \leq \inf_{\sequence{u_t^\site}{\site \in \SITE}} \;
      \EE \bgc{\sum_{\site \in \SITE} L_t^{\site}(x_t^{\site}, u_t^{\site}, \w_\post^{\site}) +
      \qvaluefunc_\post^{\site}(\x_\post^{\site})} \eqfinv \\
    & \hspace{1.0cm} \st\ \;
      \ba{\Theta_t^\site(x_t^\site, u_t^\site)}_{\site \in \SITE} \in -\CONE_t \eqfinp
  \end{align*}
  Introducing a deterministic admissible resource
  process $\sequence{r_t^\site}{\site \in \SITE} \in -\CONE_t$
  and restraining the constraint with it reinforces
  the inequality, thus giving
  \begin{subequations}
    \label{eq:nodal:proof:temp1}
    \begin{align}
      V_t^{\overline \feedback}(x_t)
      & \leq \inf_{{\sequence{u_t^\site}{\site \in \SITE}}} \;
        \EE \bgc{\sum_{\site \in \SITE} L_t^{\site}(x_t^{\site}, u_t^{\site}, \w_\post^{\site}) +
        \qvaluefunc_\post^{\site}(\x_\post^{\site}) } \\
      & \hspace{1.0cm} \st\ \;
        \Theta_t^\site(x_t^\site, u_t^\site) = r_t^\site \eqsepv \forall {\site \in \SITE}
        \eqfinv
    \end{align}
  \end{subequations}
  so that
  \begin{equation*}
    V_t^{\overline \feedback}(x_t) \leq \sum_{\site \in \SITE}
    \Bp{\inf_{u_t^{\site}}
      \EE\bc{L_t^{\site}(x_t^{\site}, u_t^{\site}, \w_\post^{\site}) +
        \qvaluefunc_\post^{\site}(\x_\post^{\site})}
      \;\; \st \;\; \Theta_t^{\site}(x_t^{\site},u_t^{\site}) = r_t^{\site}}
  \end{equation*}
  as we do not have any coupling left in \eqref{eq:nodal:proof:temp1}.
  By Equation~\eqref{eq:nodal:localdpquant}, we deduce that
  \(   V_t^{\overline \feedback}(x_t)\leq \sum_{\site \in \SITE}
  \qvaluefunc_t^{\site}(x_t^{\site}) \),
  hence the result at time~$t$.

  Furthermore, for any admissible policy $\feedback$,
  we have $V_t(x_t) \leq V_t^{\feedback}(x_t)$ as the global Bellman
  function gives the minimal cost starting at any point $x_t \in \XX_t$.
  We therefore obtain all the other inequalities
  in~\eqref{eq:nodal:boundresourcepolicy}.
\end{proof}

\subsection{Analysis of the Decentralized Information Structure}
\label{subsec:nodal:decentralizedinformation}

An interesting consequence of Propositions
\ref{prop:nodal:dppriceconstant} and~\ref{prop:nodal:dpquantconstant}
is that the local price and resource value functions
$\pvaluefunc_t^{\site}\nc{\price^{\site}}$ in~\eqref{eq:global:priceproblem}
and~$\qvaluefunc_t^{\site}\nc{\alloc^{\site}}$
in~\eqref{eq:global:quantproblem}
remain the same when choosing either the centralized information
structure or the decentralized one  in~\S\ref{subsec:nodal:generic:globaldata}.
By contrast, the global value functions~$V_t$ in~\eqref{eq:nodal:vf}
depend on that choice. Let us denote by~$V^{\mathrm{C}}_t$
(resp. $V^{\mathrm{D}}_t$) the value functions~\eqref{eq:nodal:vf}
in the centralized (resp. decentralized) case
where \( \sigma(\u_s^\site) \subset \cF_s^\site \)
(resp. \( \sigma(\u_s^\site) \subset \cF_s \)). Since the admissible
set induced by the constraint~\eqref{eq:nodal:measurability}
in the centralized case is larger than the one in the decentralized
case (because $\cF_t^{\site} \subset \cF_t$ by \eqref{eq:nodal:localinfo}),
we deduce that the lower bound is tighter for the centralized problem,
and the upper bound tighter for the decentralized problem:
for all $x_t = \sequence{x_t^\site}{\site \in \SITE} \in \XX_t$,
\begin{equation}
  \label{eq:boundsCandD}
  \sum_{\site \in \SITE} \pvaluefunc_t^{\site}\nc{\price^{\site}}(x_t^{\site})
  \leq V^{\mathrm{C}}_t(x_t) 
  \leq V^{\mathrm{D}}_t(x_t) 
  \leq \sum_{\site \in \SITE} \qvaluefunc_t^{\site}\nc{\alloc^{\site}}(x_t^{\site})
  \eqfinp
\end{equation}

Now, we show that, in some specific cases (but often encountered in practical
applications), the best upper bound
in~\eqref{eq:boundsCandD} is equal to the optimal value~$V^{\mathrm{D}}_t(x_t)$ of the decentralized problem.

\begin{proposition}
  \label{prop:upperequalD}
  If, for all~$t \in \ic{0,\final-1}$, we have the equivalence
  \begin{equation}
    \label{eq:upperequalD-ass}
    \begin{split}
      \ba{\Theta_t^\site(\x_t^\site, \u_t^\site)}_{\site \in \SITE}
      \in -\CONE_t
      \iff \\
      \bp{\exists \sequence{\alloc_t^\site}{\site \in \SITE}\in {-}\CONE_t \eqsepv
        \Theta_t^{\site}(\x_t^{\site}, \u_t^{\site})=\alloc_t^{\site}
        \quad \forall \site \in \SITE}
      \eqfinv
    \end{split}
  \end{equation}
  then the optimal value
  $V^{\mathrm{D}}_0(x_0)$ of the decentralized problem ---
  that is,
  given by~\eqref{eq:nodal:vf} where \( \sigma(\u_s^\site) \subset \cF_s^\site \)
  in~\eqref{eq:nodal:measurability} --- satisfies
  \begin{equation}
    \label{eq:upperequalD-prop}
    V_0^{\mathrm{D}}(x_0) =
    \inf_{\alloc \in -\CONE} \;
    \sum_{\site \in \SITE} \qvaluefunc_0^{\site}\nc{\alloc^{\site}}(x_0^{\site}) \eqfinp
  \end{equation}
\end{proposition}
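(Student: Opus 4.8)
The plan is to prove the double inequality
\begin{equation*}
  \inf_{\alloc \in -\CONE} \; \sum_{\site \in \SITE}
  \qvaluefunc_0^{\site}\nc{\alloc^{\site}}(x_0^{\site})
  \;\leq\; V_0^{\mathrm{D}}(x_0)
  \;\leq\;
  \inf_{\alloc \in -\CONE} \; \sum_{\site \in \SITE}
  \qvaluefunc_0^{\site}\nc{\alloc^{\site}}(x_0^{\site}) \eqfinp
\end{equation*}
The left inequality is already available: for \emph{any} deterministic admissible resource process $\alloc \in -\CONE$, Proposition~\ref{prop:nodal:stochasticvaluefuncbounds} (in its deterministic form~\eqref{eq:nodal:boundsvaluefunctiondeterministic}, applied to the decentralized structure $\cG_s^\site = \cF_s^\site$) gives $V_0^{\mathrm{D}}(x_0) \leq \sum_{\site} \qvaluefunc_0^{\site}\nc{\alloc^{\site}}(x_0^{\site})$; taking the infimum over $\alloc \in -\CONE$ yields the right-hand inequality of what we want. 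So the real content is the reverse inequality: $\inf_{\alloc \in -\CONE} \sum_{\site} \qvaluefunc_0^{\site}\nc{\alloc^{\site}}(x_0^{\site}) \leq V_0^{\mathrm{D}}(x_0)$.

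\textbf{Key steps.} First I would write out $V_0^{\mathrm{D}}(x_0)$ explicitly from~\eqref{eq:nodal:vf} with the decentralized measurability constraint $\sigma(\u_s^\site) \subset \cF_s^\site$. The crucial observation, enabled by hypothesis~\eqref{eq:upperequalD-ass}, is that the coupling constraint~\eqref{eq:nodal:couplingcons} can be equivalently rewritten as: there exists an allocation process $\Alloc = (\Alloc_s)_{s}$ with $\Alloc_s \in -\CONE_s$ $\PP$-a.s.\ such that $\Theta_s^\site(\x_s^\site,\u_s^\site) = \Alloc_s^\site$ for all $\site$ and all $s$. Substituting this into $V_0^{\mathrm{D}}(x_0)$ and then reorganizing the infimum as an infimum over such allocation processes $\Alloc$, followed by an inner infimum over $(\x^\site,\u^\site)_{\site}$ subject to $\Theta_s^\site(\x_s^\site,\u_s^\site) = \Alloc_s^\site$, we get
\begin{equation*}
  V_0^{\mathrm{D}}(x_0) = \inf_{\Alloc}
  \sum_{\site \in \SITE}
  \inf_{\x^\site,\u^\site}
  \EE\bgc{\sum_{s=0}^{\final-1} L_s^\site(\x_s^\site,\u_s^\site,\w_{s+1}^\site) + K^\site(\x_\final^\site)}
\end{equation*}
where the inner problem is under~\eqref{eq:nodal:dynamic}, $\sigma(\u_s^\site) \subset \cF_s^\site$, and $\Theta_s^\site(\x_s^\site,\u_s^\site) = \Alloc_s^\site$, and the outer infimum ranges over processes $\Alloc \in -\CONESTO$ (i.e.\ $\Alloc_s$ is $\cF_s$-adapted, $\Alloc_s \in -\CONE_s$ a.s.). This inner problem is exactly $\qvaluefunc_0^\site\nc{\Alloc^\site}(x_0^\site)$ — except that here $\Alloc^\site$ is an a.s.-valued \emph{stochastic} process, not a deterministic one. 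The next step is to argue that restricting to \emph{deterministic} $\Alloc$ loses nothing in the outer infimum: this follows because $\Alloc_0$ is $\cF_0$-measurable hence deterministic, and then an interchange-of-infimum / scenario-by-scenario argument shows that for a stagewise-independent noise with decentralized information, the optimal resource allocation at each stage $s$ may be taken deterministic (or, more carefully, one invokes that $\qvaluefunc_0^\site\nc{\cdot}$ restricted to constant processes already attains, via~\eqref{eq:nodal:localdpquant} and Proposition~\ref{prop:nodal:dpquantconstant}, the relevant value — the point being that the decentralized decomposed problem has no memory linking the $\Alloc_s$ across scenarios). Hence the outer infimum over $\Alloc \in -\CONESTO$ equals the infimum over deterministic $\alloc \in -\CONE$, giving the claimed equality~\eqref{eq:upperequalD-prop}.

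\textbf{Main obstacle.} The delicate point is the last step: justifying that the infimum over \emph{stochastic} admissible resource processes $\Alloc \in -\CONESTO$ coincides with the infimum over \emph{deterministic} ones $\alloc \in -\CONE$. The $\leq$ direction (deterministic infimum dominates) is trivial since deterministic processes are a subset. For the $\geq$ direction one needs an interchange-of-infimum-and-expectation argument — peeling off stage $0$ (where $\cF_0$ forces $\Alloc_0$ deterministic), then using stagewise independence and the decentralized structure so that the continuation problem at each reached state decouples both across units and across scenarios, allowing the later $\Alloc_s$ to be chosen deterministically as well. I expect this to require a careful backward-induction/measurable-selection argument, and this is where the hypotheses (Assumption~\ref{hyp:independent}, decentralized $\cF_s^\site$, and the equivalence~\eqref{eq:upperequalD-ass}) all get used simultaneously; the rest is bookkeeping.
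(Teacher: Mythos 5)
Your reduction to the reverse inequality, and your first two steps (rewriting the coupling constraint via~\eqref{eq:upperequalD-ass} and decomposing unit by unit), match the paper. But you misread the hypothesis, and this manufactures an obstacle that is not there --- while the argument you sketch to overcome it does not work. In~\eqref{eq:upperequalD-ass} the object $\sequence{\alloc_t^\site}{\site \in \SITE}\in -\CONE_t$ is a \emph{deterministic} vector of $\ALLOCATION_t$ (lowercase $\alloc$, an element of $\CONE_t\subset\ALLOCATION_t$, not of the stochastic set $\CONESTO$), so the hypothesis asserts that feasibility of the coupling constraint forces each random variable $\Theta_t^\site(\x_t^\site,\u_t^\site)$ to be almost surely \emph{constant}. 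Under your reading --- existence of a stochastic process $\Alloc$ with $\Alloc_t\in-\CONE_t$ a.s.\ --- the equivalence is trivially satisfied by taking $\Alloc_t^\site=\Theta_t^\site(\x_t^\site,\u_t^\site)$, the hypothesis becomes vacuous, and the proposition would then claim $V_0^{\mathrm D}(x_0)=\inf_{\alloc\in-\CONE}\sum_{\site\in\SITE}\qvaluefunc_0^\site\nc{\alloc^\site}(x_0^\site)$ unconditionally, which is false: restricting to deterministic resources is in general a strict restriction even with stagewise independence and decentralized information, because the optimal $\Theta_t^\site(\x_t^\site,\u_t^\site)$ may depend on the random state $\x_t^\site$. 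Your ``delicate last step'' --- that the infimum over $\Alloc\in-\CONESTO$ equals the infimum over deterministic $\alloc\in-\CONE$ --- is therefore not bookkeeping; it is exactly the content of the hypothesis, and the interchange-of-infimum/backward-induction argument you gesture at cannot establish it without that hypothesis.

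With the correct reading the proof is immediate and is the paper's: the feasible set of the decentralized problem is the union over deterministic $\alloc\in-\CONE$ of the sets where $\Theta_s^\site(\x_s^\site,\u_s^\site)=\alloc_s^\site$ for all $\site$ and $s$; an infimum over a union is the infimum of the infima, and for fixed $\alloc$ the inner problem splits into independent local problems (objective, dynamics and measurability constraints are all local in the decentralized structure), each equal to $\qvaluefunc_0^\site\nc{\alloc^\site}(x_0^\site)$. Corollary~\ref{cor:upperequalD} then exhibits the concrete situation (spatially independent noises, sum-to-zero coupling) in which the hypothesis holds, precisely because a sum of independent random variables that equals zero forces each summand to be constant.
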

\begin{proof}
  Using Assumption~\eqref{eq:upperequalD-ass},
  Problem~\eqref{eq:nodal:vf} for~$t=0$
  can be written as
  \begin{align*}
    V_0^{\mathrm{D}}(x_0) =
    & \inf_{\alloc\in -\CONE}
      \Bgp{\sum_{\site \in \SITE} \inf_{\x^{\site}, \u^{\site}} \;
      \EE\bgc{ \sum_{t=0}^{\final-1}
      L^{\site}_t(\va X_t^{\site}, \va U_t^{\site}, \va W^{\site}_{t+1}) +
      K^{\site}(\x_\horizon^{\site})}} \eqfinv \\
    &  \st\ \: \x_0^{\site} = x_0^{\site} \text{\, and \,}
      \FORALLTIMES{s}{0}{\final\!-\!1},
      \eqref{eq:nodal:dynamic}, \eqref{eq:nodal:measurability},
      \Theta_s^{\site}(\x_s^{\site}, \u_s^{\site}) = \alloc_s^{\site}
      \eqfinv
      \nonumber\\
    =
    & \inf_{\alloc\in -\CONE} \;
      \sum_{\site \in \SITE} \qvaluefunc_0^{\site}\nc{\alloc^{\site}}(x_0^{\site}) \eqfinv
  \end{align*}
  the last equality arising from the definition
  of~$\qvaluefunc_0^{\site}\nc{\alloc^{\site}}$ in~\eqref{eq:nodal:quantproblem-t}
  for $t=0$.
\end{proof}

As an application of the previous Proposition~\ref{prop:upperequalD},
we consider the case
of a decentralized information structure with an additional
\emph{independence assumption in space} (whereas
Assumption~\ref{hyp:independent} is an independence assumption \emph{in time}).

\begin{corollary}
  We consider the case of a decentralized information structure
  with the following two additional assumptions:
  \begin{itemize}
  \item
    the random processes $\w^\site$, for $\site \in \SITE$, are independent,
  \item
    the coupling constraints~\eqref{eq:nodal:couplingcons}
    are of the form
    \( \sum_{\site \in \SITE}\Theta_t^{\site}(\x_t^{\site}, \u_t^{\site}) = 0 \).
  \end{itemize}
  Then, the assumptions of Proposition~\ref{prop:upperequalD}
  are satisfied, so that Equality~\eqref{eq:upperequalD-prop} holds true.
  \label{cor:upperequalD}
\end{corollary}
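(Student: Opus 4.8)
The plan is to show that the two extra hypotheses of the corollary force hypothesis~\eqref{eq:upperequalD-ass} of Proposition~\ref{prop:upperequalD} to hold at every stage; then Equality~\eqref{eq:upperequalD-prop} is just a restatement of that proposition's conclusion. Fix $t\in\ic{0,\final-1}$. Under the coupling constraint $\sum_{\site\in\SITE}\Theta_t^\site(\x_t^\site,\u_t^\site)=0$, the set $-\CONE_t$ is the linear subspace of $\ALLOCATION_t$ formed by the tuples $\sequence{r^\site}{\site\in\SITE}$ with $\sum_{\site\in\SITE}r^\site=0$; in particular it is stable under all the operations used below. The implication ``$\Leftarrow$'' in~\eqref{eq:upperequalD-ass} is then immediate: if there is a deterministic $\sequence{\alloc_t^\site}{\site\in\SITE}\in-\CONE_t$ with $\Theta_t^\site(\x_t^\site,\u_t^\site)=\alloc_t^\site$ $\PP$-a.s.\ for every $\site$, then $\ba{\Theta_t^\site(\x_t^\site,\u_t^\site)}_{\site\in\SITE}=\sequence{\alloc_t^\site}{\site\in\SITE}\in-\CONE_t$ $\PP$-a.s.

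The substantial direction is ``$\Rightarrow$''. Assume $\ba{\Theta_t^\site(\x_t^\site,\u_t^\site)}_{\site\in\SITE}\in-\CONE_t$, i.e.\ $\sum_{\site\in\SITE}\Theta_t^\site(\x_t^\site,\u_t^\site)=0$ $\PP$-a.s. First I would run a routine forward induction on $s$, using the dynamics~\eqref{eq:nodal:dynamic} and the \emph{decentralized} measurability constraint $\sigma(\u_s^\site)\subset\cF_s^\site$ in~\eqref{eq:nodal:measurability} (base case: $\cF_0^\site=\{\emptyset,\Omega\}$ and $\x_0^\site=x_0^\site$ deterministic), to obtain that $\x_s^\site$ and $\u_s^\site$, hence $\Theta_s^\site(\x_s^\site,\u_s^\site)$, are $\cF_s^\site$-measurable for every $\site$ and every $s$. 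Since the processes $\w^\site$, $\site\in\SITE$, are independent, the $\sigma$-fields $\cF_t^\site=\sigma(\w_1^\site,\dots,\w_t^\site)$, $\site\in\SITE$, are mutually independent, and therefore so is the finite family of random variables $\ba{\Theta_t^\site(\x_t^\site,\u_t^\site)}_{\site\in\SITE}$.

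It then remains to invoke the elementary measure-theoretic fact that a finite family of independent random variables (valued in standard Borel spaces, hence in particular in the finite-dimensional spaces occurring in the applications) whose sum is $\PP$-a.s.\ constant consists of $\PP$-a.s.\ constant random variables: for a fixed $\site$, the identity $\Theta_t^\site(\x_t^\site,\u_t^\site)=-\sum_{\site'\neq\site}\Theta_t^{\site'}(\x_t^{\site'},\u_t^{\site'})$ makes $\Theta_t^\site(\x_t^\site,\u_t^\site)$ measurable with respect to the $\sigma$-field generated by the other summands while being independent of it, so its law is degenerate. This produces deterministic vectors $\alloc_t^\site\in\ALLOCATION_t^\site$ with $\Theta_t^\site(\x_t^\site,\u_t^\site)=\alloc_t^\site$ $\PP$-a.s.; summing over $\site$ gives $\sum_{\site\in\SITE}\alloc_t^\site=0$, i.e.\ $\sequence{\alloc_t^\site}{\site\in\SITE}\in-\CONE_t$, which is exactly the right-hand side of~\eqref{eq:upperequalD-ass}. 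Having checked~\eqref{eq:upperequalD-ass} for all $t\in\ic{0,\final-1}$, Proposition~\ref{prop:upperequalD} applies and yields~\eqref{eq:upperequalD-prop}.

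The main obstacle --- really the only non-bookkeeping point --- is the ``$\Rightarrow$'' direction, and within it the interplay of the two added hypotheses: the propagation of local measurability through the controlled dynamics (so that each coupling term $\Theta_t^\site(\x_t^\site,\u_t^\site)$ is driven only by the local noise $\w^\site$ and is thus, by the spatial-independence assumption, independent across units), together with the classical lemma that independent summands adding up to a constant are individually constant. Both ingredients are standard; everything else reduces to bracket bookkeeping and an appeal to Proposition~\ref{prop:upperequalD}.
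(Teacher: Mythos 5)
Your proposal is correct and follows essentially the same route as the paper's proof: propagate local $\cF_t^\site$-measurability of $\Theta_t^\site(\x_t^\site,\u_t^\site)$ through the dynamics under the decentralized information constraint, use spatial independence of the noise processes to get independence of these random variables, invoke the lemma that independent summands with an almost surely constant sum are individually constant, and then apply Proposition~\ref{prop:upperequalD}. You are in fact slightly more explicit than the paper, which leaves the forward measurability induction and the degeneracy lemma as "well-known"; your one-line justification (a random variable measurable with respect to a $\sigma$-field of which it is independent has degenerate law) is a valid way to fill that in.
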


\begin{proof}
  From the dynamic constraint~\eqref{eq:nodal:dynamic} and from
  the measurability constraint~\eqref{eq:nodal:measurability},
  we have that each term~$\Theta_t^{\site}(\x_t^{\site}, \u_t^{\site})$ is
  $\cF_t^{\site}$-measurable in the decentralized information structure case.
  Since the random processes $\w^\site$, for $\site \in \SITE$, are independent,
  so are the $\sigma$-fields~$\cF_t^{\site}$, for $\site \in \SITE$, from which we
  deduce that the random variables
  $\Theta_t^{\site}(\x_t^{\site}, \u_t^{\site})$ are independent.
  Now, these random variables sum up to zero.
  But it is well-known that, if a sum of independent random variables
  is zero, then every random variable in the sum is constant (deterministic).
  Hence, each random variable $\Theta_t^{\site}(\x_t^{\site}, \u_t^{\site})$ is constant.
  By introducing their constant values~$\sequence{\alloc_t^\site}{\site \in \SITE}$,
  the constraints~\eqref{eq:nodal:couplingcons} are written equivalently
  $\Theta_t^{\site}(\x_t^{\site},\u_t^{\site}) - \alloc_t^{\site} = 0$,
  $\forall \: \site \in\SITE$,
  and $\sum_{\site\in\SITE} \alloc_t^{\site} = 0$.
  We conclude with Proposition~\ref{prop:upperequalD}.
\end{proof}

\begin{remark}
  \label{rem:decentralizedpolicy}
  In the case of a decentralized information structure~\eqref{eq:nodal:localinfo},
  it seems difficult to produce Bellman-based online policies.
  Indeed, neither the global price policy in~\eqref{eq:nodal:globalpricepolicy}
  nor the global resource policy in~\eqref{eq:nodal:globalresourcepolicy}
  are implementable since both policies require the knowledge of the global
  state $\sequence{x_t^\site}{\site \in \SITE}$ for each unit~$\site$, which is incompatible
  with the information constraint~\eqref{eq:nodal:localinfo}.
  Nevertheless, one can use the results given by resource decomposition
  to compute a local state feedback as follows.
  For a given deterministic admissible resource
  process~$\alloc \in -\CONE$, solving at time~$t$ and for each~$\site \in\SITE$
  the subproblem
  \begin{align*}
    \overline\feedback_t^{\site}(x_t^{\site}) \in \argmin_{u_t^{\site}}
    & \; \EE\Bc{ L_t^{\site}(x_t^{\site}, u_t^{\site}, \w_\post^{\site}) +
      \overline V_\post^{\site}\bp{ g_t^{\site}(x_t^{\site}, u_t^{\site}, \w_\post^{\site}) }} \eqfinv \\
    \st\
    & \Theta_t^{\site}(x_t^{\site}, u_t^{\site}) = \alloc_t^{\site}
  \end{align*}
  generates a local state feedback
  \( \overline\feedback_t^{\site} : \XX_t^{\site} \to\UU_t \) which is both
  compatible with the decentralized information
  structure~\eqref{eq:nodal:localinfo}
  and such that the policy \( \overline \feedback =
  \ba{\overline\feedback_{t}}_{t\in \ic{0, \final-1}} \) is admissible
  as it satisfies the global coupling
  constraint~\eqref{eq:nodal:couplingcons}
  between all units because $\alloc \in -\CONE$,
  where $\CONE$ is defined in~\eqref{eq:nodal:global_constraint_set}.

  By contrast, replicating this procedure with a deterministic admissible
  price process would produce a policy which would not satisfy the global
  coupling constraint~\eqref{eq:nodal:couplingcons}.
\end{remark}

\section{Application to Microgrids Optimal Management}
\label{chap:district:numerics}

We illustrate the effectiveness of the two decomposition schemes
introduced in Sect.~\ref{sec:genericdecomposition}
by presenting numerical results.
In~\S\ref{Description_of_the_problems}, we describe an application
in the optimal management of urban microgrids.
In~\S\ref{ssec:nodalalgorithms}, we detail how we implement
algorithms to obtain bounds and policies.
In~\S\ref{Numerical_results}, we
illustrate the performance of the decomposition methods
with numerical results.

\subsection{Description of the Problems}
\label{Description_of_the_problems}

The energy management problem
and the structure of the microgrids
come from case studies
provided by the urban Energy Transition Institute
Efficacity\footnote{Established in 2014 with the French government support,
  Efficacity aims to develop and implement innovative
  solutions to build and manage energy-efficient 
  cities.}.
For more details on microgrid modeling and on the formulation
of associated optimization problems, the reader is referred
to the PhD thesis~\cite{thesepacaud}.

We represent a district microgrid by a directed graph
$(\NODES, \ARCS)$, with $\NODES$ the set of nodes and $\ARCS$
the set of arcs. Each node of the graph corresponds to a building.
The buildings exchange energy through the edges of the graph,
hence coupling the different nodes of the graph by static
constraints (Kirchhoff law).
We manage the microgrids over a given day
in summer, with decisions taken every 15mn, so that $\final = 96$.

Each building has its own electrical and domestic hot water demand profiles,
and possibly its own solar panel production.
At node~$\node$, we consider a random variable~$\va\w_t^\node$,
with values in $\WW_t^\node=\RR^2$, representing the following couple
of uncertainties:
the local electricity demand minus the production of the solar panel;
the domestic hot water demand.
We also suppose given a corresponding finite probability distribution on the set~$\WW_t^\node$.

Each building is equipped with an electrical hot water tank;
some buildings have solar panels and some others have batteries.
We view batteries and electrical hot water tanks as energy stocks
so that, depending on the presence of battery inside the building,
we introduce a state~$\x_t^\node$ at node~$\node$ with dimension~2 or~1
(energy stored inside the water tank and energy stored in the battery),
and the same with the control $\u_t^\node$ at node~$\node$
(power used to heat the tank and power exchanged with the battery).
Each node of a graph is modelled as a local control system
in which the cost function corresponds to import electricity from the external
grid. Summing the costs and taking the expectation
(supposing that the $\np{\w_1, \cdots, \w_\final}$
are stagewise independent random variables),
we obtain a global optimization problem of the form~\eqref{eq:nodal:vf}.

We consider six different problems with growing sizes.
Table~\ref{tab:numeric:pbsize} displays the different dimensions
considered.
\begin{table}[!ht]
  \centering
  {\normalsize
    \begin{tabular}{|c|ccccc|}
      \hline
      Problem           & $\card{\NODES}$ & $\card{\ARCS}$ & $\mathbf{dim(\XX_t)}$ & $dim(\WW_t)$ & $supp(\w_t)$ \\
      \hline
      \hline
      \textrm{3-nodes}  & 3               & 3               & \textbf{4}            & 6            & $10^3$    \\
      \textrm{6-nodes}  & 6               & 7               & \textbf{8}            & 12           & $10^6$    \\
      \textrm{12-nodes} & 12              & 16              & \textbf{16}           & 24           & $10^{12}$ \\
      \textrm{24-nodes} & 24              & 33              & \textbf{32}           & 48           & $10^{24}$ \\
      \textrm{48-nodes} & 48              & 69              & \textbf{64}           & 96           & $10^{48}$ \\
      \hline
    \end{tabular}
  }
  \caption{Microgrid management problems with growing dimensions}
  \label{tab:numeric:pbsize}
\end{table}
As an example, the 12-nodes problem consists of twelve buildings;
four buildings are equipped with a battery, and four other
buildings are equipped with solar panels.
The devices are dispatched so that a building equipped with a solar
panel is connected to at least one building with a battery.

\subsection{Computing Bounds, Decomposed Value Functions and Devising Policies}
\label{ssec:nodalalgorithms}

We apply the two decomposition algorithms, introduced
in~\S\ref{subsec:nodal:processdesign} and
in~\S\ref{subsec:nodal:admissiblepolicy},
to each problem as described in Table~\ref{tab:numeric:pbsize}.
We will term \emph{Dual Approximate Dynamic Programming}
(DADP) the price decomposition algorithm
and \emph{Primal Approximate Dynamic Programming} (PADP)
the resource decomposition algorithm described in
\S\ref{subsec:nodal:processdesign} and
in~\S\ref{subsec:nodal:admissiblepolicy}.
We compare DADP and PADP
with the well-known Stochastic Dual Dynamic Programming (SDDP)
algorithm (see~\cite{girardeau2014convergence} and references
inside) applied to the global problem.
In this part, we suppose given an initial state
$x_0 = \sequence{x_0^\site}{\site \in \SITE} \in \XX_0$.

Regarding the SDDP algorithm, it is not implementable in
a straightforward manner since the cardinality of the global noise
support becomes huge with the number~$\card{\NODES}$ of nodes
(see Table~\ref{tab:numeric:pbsize}), so that the exact computation
of an expectation \wrt\ the global uncertainty
$\w_t= \sequence{\w_t^\node}{\node\in \NODES}$ is out of reach.
To overcome this issue, we have resampled the probability distribution
of the global noise~$\sequence{\w_t^\node}{\node\in \NODES}$ at
each time~$t$ by using the $k$-means clustering method
(see \cite{rujeerapaiboon2018scenario}).
Thanks to the convexity properties of the problem, the optimal quantization
yields a new optimization problem
whose optimal value is a lower bound for the optimal value
of the original problem (see \cite{lohndorfmodeling} for details).
Thus, the exact lower bound given by SDDP with resampling remains
a lower bound for the exact lower bound given by SDDP without resampling,
which itself is, by construction, a lower bound for the original problem.

Regarding DADP and PADP, we use a quasi-Newton algorithm
to perform the maximization \wrt\ $\price$ in~\eqref{eq:nodal:relaxedconstraintdual}
and the minimization \wrt\ $\alloc$ in~\eqref{eq:nodal:overconstraint}.
More precisely, the quasi-Newton algorithm is performed using Ipopt 3.12
(see~\citep{wachter2006implementation}). The algorithm stops either
when a stopping criterion is fulfilled or when no descent direction
is found.

Each algorithm (SDDP, DADP, PADP) returns a sequence
of global value functions indexed by time.
Indeed, SDDP produces approximate global value functions,
and, for DADP (resp. PADP), we sum the local price value functions
(resp. the local resource value functions) obtained as
solutions of the recursive Dynamic Programming
equations~\eqref{eq:localdp} (resp.~\eqref{eq:nodal:localdpquant}),
for the deterministic admissible price coordination process
$\price=(\price_0,\cdots,\price_{\final-1}) \in \CONE^\star$
(resp. the deterministic admissible resource coordination process
$\alloc=(\alloc_0,\cdots,\alloc_{\final-1}) \in -\CONE$)
obtained at the end of~\S\ref{subsec:nodal:processdesign}
for an initial state
$x_0 = \sequence{x_0^\site}{\site \in \SITE} \in \XX_0$.

As explained in~\S\ref{subsec:nodal:admissiblepolicy},
these global value functions yield policies.
Thus, we have three policies (SDDP, DADP, PADP)
that we can compare.
As the policies are admissible,
the three expected values of the associated costs are
\emph{upper bounds} of the optimal value of the global optimization problem.

\subsection{Numerical Results}
\label{Numerical_results}

We compare the three algorithms (SDDP, DADP, PADP)
regarding their execution time
in~\S\ref{Computation_of_the_Bellman_value_functions},
the quality of their theoretical bounds
in~\S\ref{Quality_of_the_theoretical_bounds},
and the performance of their policies in simulation
in~\S\ref{Policy_simulation_results}.

\subsubsection{CPU Execution Time}
\label{Computation_of_the_Bellman_value_functions}

Table~\ref{tab:district:numeric:optres} details CPU execution time
and number of iterations before reaching stopping criterion
for the three algorithms.
\begin{table}[!ht]
  \centering
  {\normalsize
    \begin{tabular}{|l|ccccc|}
      \hline
      Problem            & \textrm{3-nodes}  \hspace{-0.2cm}
      & \textrm{6-nodes}  \hspace{-0.2cm}
      & \textrm{12-nodes} \hspace{-0.2cm}
      & \textrm{24-nodes} \hspace{-0.2cm}
      & \textrm{48-nodes} \hspace{-0.2cm} \\
      \hline
      dim($\XX_t$)       & 4                & 8                & 16
      & 32               & 64     \\
      \hline
      \hline
      SDDP CPU time      & 1'               & 3'               & 10'
      & 79'              & 453'   \\
      SDDP iterations    & 30               & 100              & 180
      & 500              & 1500   \\
      \hline
      \hline
      DADP CPU time      & 6'               & 14'              & 29'
      & 41'              & 128'   \\
      DADP iterations    & 27               & 34               & 30
      & 19               & 29     \\
      \hline
      \hline
      PADP CPU time      & 3'               & 7'               & 22'
      & 49'              & 91'    \\
      PADP iterations    & 11               & 12               & 20
      & 19               & 20     \\
      \hline
    \end{tabular}
  }
  \caption{Comparison of CPU time and number of iterations for SDDP, DADP and PADP}
  \label{tab:district:numeric:optres}
\end{table}
For a small-scale problem like \textrm{3-nodes} (second column
of Table~\ref{tab:district:numeric:optres}), SDDP is faster
than DADP and PADP. However, for the 48-nodes problem (last
column of Table~\ref{tab:district:numeric:optres}),
\emph{DADP and PADP} are \emph{more than three times faster}
than SDDP.

Figure~\ref{fig:nodal:cputime} depicts how much CPU
time take the different algorithms with respect to the state dimension.
For this case study, we observe
that the \emph{CPU time grows almost linearly} \wrt\ the dimension of the state
for DADP and PADP, whereas it grows exponentially for SDDP.
Otherwise stated, decomposition methods scale better than SDDP
in terms of CPU time for large microgrids instances.
\begin{figure}[!ht]
  \centering
  \includegraphics[width=10.0cm]{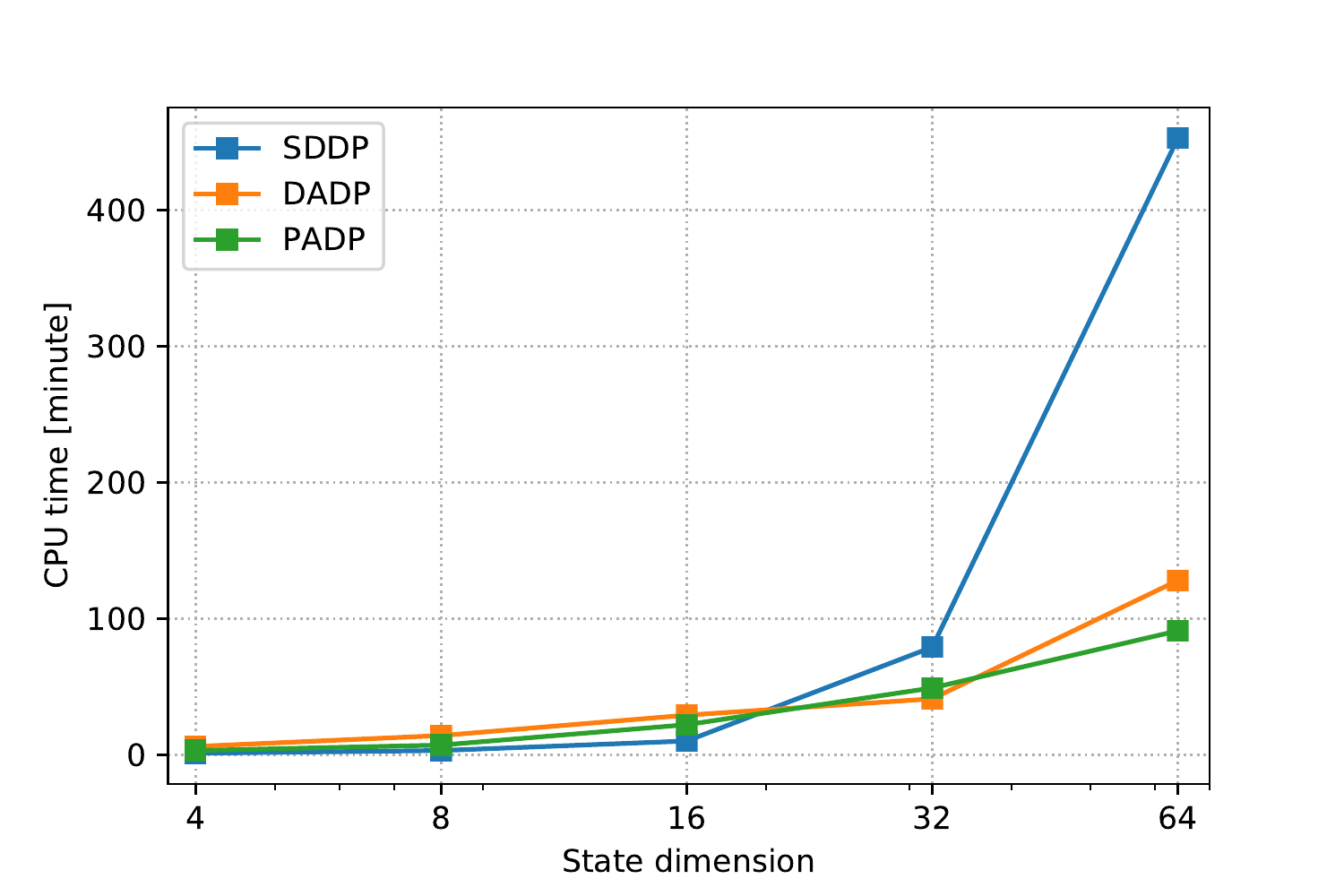} \\
  \caption{CPU time for the three algorithms as a function
    of the state dimension}
  \label{fig:nodal:cputime}
\end{figure}

\subsubsection{Quality of the Theoretical Bounds}
\label{Quality_of_the_theoretical_bounds}

In Table~\ref{tab:district:numeric:upperlower},
we give the lower and upper bounds (of the optimal
cost~$V_0(x_0)$ of the global optimization problem)
achieved by the three algorithms (SDDP, DADP, PADP).

We recall that SDDP returns a lower bound of the optimal
cost~$V_0(x_0)$, both by nature and also because we used
a suitable resampling of the global uncertainty distribution
instead of the original distribution itself (see the discussion
in~\S\ref{ssec:nodalalgorithms}). 
DADP and PADP lower and upper bounds are given by
Equation~\eqref{eq:nodal:relaxedconstraintdual}
and Equation~\eqref{eq:nodal:overconstraint} respectively.
In Table~\ref{tab:district:numeric:upperlower}, we observe that
\begin{itemize}
\item SDDP's and DADP's lower bounds are close to each other,
\item for problems with more than 12 nodes, DADP's lower
  bound is up to 2.6\% better than SDDP's lower bound,
\item the gap between PADP's upper bound and
  the two lower bounds is rather large.
\end{itemize}
\begin{table}[!ht]
  \centering
  {\normalsize
    \begin{tabular}{|l|ccccc|}
      \hline
      Problem        & \textrm{3-nodes} & \textrm{6-nodes} & \textrm{12-nodes} & \textrm{24-nodes} & \textrm{48-nodes} \\
      \hline
      \hline
      SDDP LB        & 225.2            & 455.9            & 889.7             & 1752.8            & 3310.3            \\
      \hline
      DADP LB        & 213.7            & 447.3            & 896.7             & 1787.0            & 3396.4            \\
      \hline
      PADP UB        & 252.1            & 528.5            & 1052.3            & 2100.7            & 4016.6            \\
      \hline
    \end{tabular}
  }
  \caption{Upper and lower bounds (of the optimal
    cost~$V_0(x_0)$ of the global optimization problem) given by SDDP, DADP and PADP}
  \label{tab:district:numeric:upperlower}
\end{table}

To sum up, DADP achieves a slightly better lower bound than SDDP,
with much less CPU time (and a parallel version of DADP would
give even better performance in terms of CPU time).

\subsubsection{Policy Simulation Performances}
\label{Policy_simulation_results}

In Table~\ref{tab:district:numeric:simulation},
we give the performances of the policies yielded by
the three algorithms.
The SDDP, DADP and PADP values are obtained by Monte Carlo simulation of the
corresponding policies on $5,000$ scenarios. The notation
$\pm$ corresponds to the 95\% confidence interval for the
numerical evaluation of the expected costs. We use
the value obtained by the SDDP policy as a reference,
a positive gap meaning that the corresponding policy
makes better than the SDDP policy.
All these values are \emph{statistical} upper bounds of the optimal
cost~$V_0(x_0)$ of the global optimization problem.

\begin{table}[H]
  \centering
  \resizebox{\textwidth}{!}{
    \begin{tabular}{|l|ccccc|}
      \hline
      Network      & \textrm{3-nodes} & \textrm{6-nodes} & \textrm{12-nodes} &
                                                                               \textrm{24-nodes} & \textrm{48-nodes} \\
      \hline
      \hline
      SDDP value   & 226 $\pm$ 0.6    & 471 $\pm$ 0.8    & 936 $\pm$ 1.1     & 1859 $\pm$ 1.6    & 3550 $\pm$ 2.3    \\
      \hline
      \hline
      DADP value   & 228 $\pm$ 0.6    & 464 $\pm$ 0.8    & 923 $\pm$ 1.2     & 1839 $\pm$ 1.6    & 3490 $\pm$ 2.3    \\
      Gap          & - 0.8 \%         & + 1.5 \%         & +1.4\%            & +1.1\%            & +1.7\%    \\
      \hline
      \hline
      PADP value   & 229 $\pm$ 0.6    & 471 $\pm$ 0.8    & 931 $\pm$ 1.1     & 1856 $\pm$ 1.6    & 3508  $\pm$ 2.2   \\
      Gap          & -1.3\%           & 0.0\%            & +0.5\%            & +0.2\%            & +1.2\%    \\
      \hline
    \end{tabular}
  }
  \caption{Simulation costs (Monte Carlo) for policies induced by
    SDDP, DADP and PADP}
  \label{tab:district:numeric:simulation}
\end{table}

We make the following observations:
\begin{itemize}
\item
  for problems with more than 6 nodes,
  both the DADP policy and the PADP policy beat
  the SDDP policy,
\item
  the DADP policy gives better results than the PADP policy,
\item
  comparing with the last line of Table
  \ref{tab:district:numeric:upperlower}, the statistical
  upper bounds 
  are much closer to SDDP and DADP lower bounds than PADP's
  exact upper bound.
\end{itemize}
For this last observation, our interpretation is as follows:
the PADP algorithm is penalized because, as the resource coordination
process is deterministic, it imposes constant
importation flows for every possible realization of
the uncertainties (see also the interpretation of PADP
in the case of a decentralized information structure
in~\S\ref{subsec:nodal:decentralizedinformation}).

\section{Conclusions}

We have considered multistage stochastic optimization problems
involving multiple units coupled by spatial static constraints.
We have presented a formalism for joint
temporal and spatial decomposition.
We have provided two fully parallelizable algorithms
that yield theoretical bounds, value functions
and admissible policies.
We have stressed the key role played by information structures in the
performance of the decomposition schemes.
We have tested these algorithms on the management of
several district microgrids. Numerical results have showed the effectiveness
of the approach: the price decomposition algorithm beats
the reference SDDP algorithm for large-scale problems with
more than 12~nodes, both in terms of theoretical bounds and
policy performance, and in terms of computation time. On problems
with up to 48~nodes (corresponding to 64~state variables), we have
observed that their performance scales well as the dimension of the state
grew: SDDP is affected by the well-known curse of dimensionality,
whereas decomposition-based methods are not.

Possible extensions are the following.
In~\S\ref{subsec:nodal:processdesign} and
in~\S\ref{subsec:nodal:admissiblepolicy},
we have presented a serial version of the decomposition algorithms,
but we believe that leveraging their parallel nature could decrease
further their computation time.
In~\S\ref{subsec:nodal:decomposedDPdeterministic},
we have only considered deterministic price and resource
coordination processes. Using larger search sets
for the coordination variables, e.g. considering
Markovian coordination processes, would make it
possible to improve the performance of the algorithms
(see \cite[Chap.~7]{thesepacaud} for further details).
However, one would need to analyze how to obtain a good
trade-off between accuracy and numerical performance.

\end{document}